\newtheorem{theorem}{Theorem}[section]
\newtheorem{corollary}[theorem]{Corollary}
\newtheorem{lemma}[theorem]{Lemma}
\newtheorem{proposition}[theorem]{Proposition}
\newtheorem{problem}[theorem]{Problem}
\begin{document}

\title{Colourings, Homomorphisms, and Partitions of Transitive Digraphs}

\author{
Tom\'{a}s Feder \\
   268 Waverley St., Palo Alto, CA 94301, USA \\
   {\tt tomas@theory.stanford.edu}\\
\\
Pavol Hell \\
   School of Computing Science \\
   Simon Fraser University \\
   Burnaby, B.C., Canada V5A 1S6 \\
   {\tt pavol@cs.sfu.ca}\\
\\
C{\'e}sar Hern{\'a}ndez Cruz \\
   Facultad de Ciencias \\
   Universidad Nacional Aut\'onoma de M\'exico \\
   Ciudad Universitaria, C.P. 04510,  M\'exico, D.F. \\
   {\tt cesar@matem.unam.mx}\\
}

\date{}

%%%%%%%%%%%%%%%%%%%%%%%%%%%%%%%%%%%%%%%%%%%%%%%%%%%%%%%%
% Author's definitions

\newcommand{\DEF}[1]{{\em #1\/}}

\newcommand\chic{\chi_c}
\newcommand\C{\hbox{${\cal C}$}}
\newcommand{\RR}{\mbox{$\mathbb R$}}
\newcommand{\NN}{\mbox{$\mathbb N$}}
\newcommand{\ZZ}{\mbox{$\mathbb Z$}}
\newcommand{\eopf}{\raisebox{0.8ex}{\framebox{}}}
\newcommand{\dist}{\hbox{\rm d}}
\renewcommand\a{\alpha}
\renewcommand\b{\beta}
\renewcommand\c{\gamma}
\renewcommand\d{\delta}
\newcommand\D{\Delta}
\newcommand{\directedchi}{\mbox{$\vec{\chi}$}}
\newcommand{\directedE}{\mbox{$\vec{E}$}}
\newcommand{\directedG}{\mbox{$\vec{G}$}}
\newcommand{\directedK}{\mbox{$\vec{K}$}}

\newenvironment{proof}%
{\noindent{\bf Proof.}\ }%
{\hfill\eopf\par\bigskip}%

%%%%%%%%%%%%%%%%%%%%%%%%%%%%%%%%%%%%%%%%%%%%%%%%%%%%%%%%

\maketitle

\begin{abstract}
We investigate the complexity of generalizations of colourings (acyclic colourings, $(k,\ell)$-colourings, 
homomorphisms, and matrix partitions), for the class of transitive digraphs. Even though transitive digraphs 
are nicely structured, many problems are intractable, and their complexity turns out to be difficult to classify. 
We present some motivational results and several open problems.
\end{abstract}

%,paulusma,pfect,chordal,split,payam}.  \cite{survey,paulusma}.  \cite{jing,kernels}  \cite{joergen}

\section{Introduction}

Recently, there has been much interest in the complexity of colouring, homomorphism, and matrix partition
problems for restricted graph classes. These typically include the class of perfect graphs or its subclasses 
(chordal, interval, split graphs), or graphs without certain forbidden induced subgraphs \cite{paulusma,survey}.
Here we study similar questions for a restricted class of {\em digraphs}, namely, for transitive digraphs. As far 
as we know, this is the first paper concerned with the complexity of homomorphisms, or matrix partition
problems of any restricted digraph class. Transitive digraphs are known to have a very nice structure \cite{joergen}, 
and many hard problems are known to become easy when restricted to transitive digraphs \cite{jing,kernels}. 
Yet we find that even for this relatively simple class there are difficult complexity problems for colourings, 
homomorphisms, and matrix partitions. We make an initial study of these problems and identify interesting 
open questions.

Given a fixed digraph $H$, an $H$-colouring of a digraph $G$ is a homomorphism of $G$ to $H$,
i.e., a mapping $f : V(G) \to V(H)$ such that $f(u)f(v)$ is an arc of $H$ whenever $uv$ is an arc of $G$.
The $H$-{\em colouring problem} asks whether an input digraph $G$ admits an $H$-colouring. In the
{\em list} $H$-{\em colouring problem} the input $G$ comes equipped with lists $L(u), u \in V(G)$,
and the homomorphism $f$ must also satisfy $f(u) \in L(u)$ for all vertices $u$. Finally, the
$H$-{\em retraction problem} is a special case of the list $H$-colouring problem, in which each list
is either $L(u)=\{u\}$ or $L(u)=V(H)$. We also mention the more general {\em constraint satisfaction 
problem for $H$}, which is defined the same way as the $H$-colouring problem, except the structure 
$H$ is not necessarily just a digraph (i.e., a structure with one binary relation), but a structure with 
any finite number of relations of finite arities. The input $G$ is also a structure with corresponding 
relations, and the homomorphism must preserve all relations, see \cite{fv} for details.

Instead of the digraph $H$ we may consider a {\em trigraph} $H$ \cite{trig} (cf. also \cite{trig1}; in 
a trigraph, arcs (including loops) can be {\em strong} or {\em weak}. An $H$-colouring of a digraph 
$G$ in this case is a mapping $f : V(G) \to V(H)$ such that $f(u)f(v)$ is a weak arc or a strong arc 
of $H$ whenever $uv$ is an arc of $G$, and $f(u)f(v)$ is a weak arc or a non-arc of $H$ whenever
$uv$ is a non-arc of $G$. Each trigraph $H$ can also be encoded as an $m$ by $m$ matrix
$M$ (with $m=|V(H)|$), in which $M(i,j)=0$ if $ij$ is a non-arc (non-loop if $i=j$) of $H$, 
$M(i,j)=*$ if $ij$ is a weak arc (weak loop if $i=j$) of $H$, and $M(i,j)=1$ if $ij$ is a strong 
arc (strong loop if $i=j$) of $H$. We say that $M$ is the matrix {\em corresponding to} $H$.
Then an $H$-colouring of a digraph $G$ is a partition of 
$V(G)$ to parts $V_1, V_2, \dots, V_m$, so that $V_i$ is a strong clique of $G$ if $M(i,i)=1$ 
and is an independent set of $G$ if $M(i,i)=0$; and similarly there are all arcs from $V_i$ to
$V_j$ in $G$ if $M(i,j)=1$ and no arcs from $V_i$ to $V_j$ in $G$ if $M(i,j)=0$ \cite{motwa}.
Such a partition is called an $M$-{\em partition}, or simply a {\em matrix partition}. (We note
that a matrix partition problem is not directly a constraint satisfaction problem, since the
strong arc constraints are of different nature.)

%Describe CSP$(H)$ for a relational template $H$, list homomorphisms, and also RET$(H)$ with 
%one-or-all lists and $H$ being a subgraph of $G$ where the "one-lists" lie, exactly one of each, 
%and also it could be just on black vertices if the white neighbourhoods are domination-free (cf. 
%Feder-Vardi). Then describe $M$-partitions, trigraphs and their correspondence to $M$-partitions
%(both not necessarily symmetric).> Minimal obstructions and duality results, and polynomiality.

If $H$ is a digraph or a trigraph, a {\em minimal $H$-obstruction} is a digraph $G$ which is 
not $H$-colourable, but for which all vertex removed subgraphs $G-v$ are $H$-colourable. 
If $H$ is a trigraph described by a matrix $M$, we also call it a {\em minimal $M$-obstruction}.
In certain cases, the number of minimal $H$-obstructions is finite: for instance when $H$ is
the trigraph with one strong loop at $a$ and one weak undirected edge $ab$, the class of graphs
that are $H$-colourable is the class of split graphs, characterized by the absence of three induced
subgraphs ($C_4, \overline{C_4}, C_5$)  \cite{gol}, and hence for this $H$ there are three minimal 
$H$-obstructions. In such a case, obviously the $H$-colouring problem is polynomial time
solvable. In other cases, the number of minimal $H$-obstructions is infinite, but the problem is
still solvable in polynomial time: for instance when $H=K_2$, the class of $H$-colourable graphs
is the class of bipartite graphs, recognizable in polynomial time, while the minimal obstructions are 
all odd cycles. Finally, in other cases, the problem is NP-complete, such as, say, when $H=K_3$.

It is worth noting that the input digraphs $G$ never have loops, whether for $H$-colouring or
$M$-partition problems. It is traditional in graph theory to define transitive digraphs in which 
there are no loops but symmetric edges are allowed. Specifically a digraph $H$ is defined to be 
{\em transitive} if for any three {\em distinct} vertices $u, v, w$, the existence of the arcs $uv, vw$ implies 
the existence of the arc $uw$. Note that an acyclic digraph is transitive if and only if the arcs define
a transitive relation in the usual sense. However, a digraph with a directed cycle is transitive if and
only if its reflexive closure (i.e., adding all loops) defines a transitive relation. This peculiarity means
that, for instance, when taking a transitive closure of a digraph we omit any loops that would exist
in a transitive closure as a binary relation.

Acyclic transitive digraphs have particularly simple structure, namely, they are exactly those digraphs 
whose reflexive closure is a partial order. It is well known \cite{joergen}, that each transitive digraph 
$G$ is obtained from an acyclic transitive digraph $J$ by {\em replication}, whereby each $j \in V(J)$ 
is replaced by $k_j \ge 0$ vertices forming a strong clique, so that if $ij$ is an arc in $J$, then all 
$k_i$ vertices replacing $i$ dominate (i.e., have arcs to) in $G$ all $k_j$ vertices replacing $j$. Note 
that all $k_j$ vertices replacing $j$ have exactly the same in- and out-neighbours in $G$ (except that 
each of them does not dominate itself). Note that the strong components of a transitive digraph $G$ 
are strong cliques. A strong component of $G$ is called {\em initial} if the corresponding vertex $j$ 
of $J$ is a source (indegree zero vertex). Also note that if $G$ is a semi-complete transitive digraph, 
then $J$ is a transitive tournament. 

%Describe the duplication (or replication) operation, whereby a vertex $x$ is replaced by two vertices 
%$x_1, x_2$ (or several vertices $x_1, x_2, \dots$), each of which has the same in-neighbours and 
%out-neighbours as $x$. Sometimes the duplicated vertices $x_1, x_2$ are fully adjacent, forming a
%digon $x_1x_2, x_2x_1$ (or a strong clique on $x_1, x_2, \dots$), and other times their adjacencies
%will be more carefully specified.

A {\em digon} is a pair of arcs $uv, vu$.

\section{Homomorphisms}

Many homomorphism problems are easy when restricted to transitive input digraphs; we often
get only finitely many transitive minimal $H$-obstructions.

Suppose, for instance, that $H$ is a {\em symmetric digraph}, i.e., each arc is in a digon. 
In other words, $H$ is obtained from a graph $H'$ by replacing each edge of $H'$ by a digon.

\begin{proposition}
Let $H$ be a symmetric digraph, and let $m$ be the size of a largest strong clique in $H$.

Then all transitive minimal $H$-obstructions have $m+1$ vertices.
\end{proposition}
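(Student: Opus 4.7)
The approach is to reduce $H$-colouring of $G$ to an undirected graph homomorphism problem, and then use the poset-like structure of transitive $G$ to replace ``$H$-colourability'' by a clique condition on $G$. A loop of $H$ absorbs any digraph, so we may assume $H$ has no loops; writing $\underline{H}$ and $\underline{G}$ for the underlying simple graphs, a digraph homomorphism $G \to H$ is then exactly an undirected graph homomorphism $\underline{G} \to \underline{H}$, because $H$ is symmetric. A strong clique of $H$ is the same as an ordinary clique of $\underline{H}$, so $m = \omega(\underline{H})$, and in particular $K_m \subseteq \underline{H}$.

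The key equivalence I would establish is that $G$ is $H$-colourable if and only if $\omega(\underline{G}) \le m$. Necessity is immediate, because a loopless-target homomorphism must be injective on each clique, so $\omega(\underline{G}) \le \omega(\underline{H}) = m$. For sufficiency I would invoke the structure theorem: write $G$ as an acyclic transitive digraph $J$ with replication multiplicities $k_j \ge 0$, so $\underline{G}$ consists of cliques $C_j$ of size $k_j$ joined completely whenever $i,j$ are comparable in $J$. A clique of $\underline{G}$ is exactly $\bigcup_{j \in c} C_j$ for some chain $c$ of $J$, hence $\omega(\underline{G}) = \max_c \sum_{j \in c} k_j$. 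Defining $W(j) = k_j + \max\{ W(i) : i < j \text{ in } J \}$ (zero at sources) and colouring the $k_j$ copies of $j$ with the consecutive colours $\{ W(j) - k_j + 1, \dots, W(j) \}$ yields a proper colouring of $\underline{G}$ with $\max_j W(j) = \omega(\underline{G})$ colours, whence any $\underline{G}$ with $\omega(\underline{G}) \le m$ admits a homomorphism to $K_m \subseteq \underline{H}$.

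With this equivalence in hand the proposition falls out quickly. If $G$ is a minimal transitive $H$-obstruction then $\omega(\underline{G}) \ge m+1$, so $\underline{G}$ contains a clique $S$ of size $m+1$; if $|V(G)| > m+1$ we could pick $v \notin S$ and find that $G - v$ still contains $S$, hence is not $H$-colourable, contradicting minimality, whence $|V(G)| = m+1$. Conversely any transitive digraph on $m+1$ vertices carrying a clique of size $m+1$---for instance the replication of a single vertex of $J$ into a strong clique of size $m+1$---is non-$H$-colourable while every vertex deletion leaves only $m$ vertices and is therefore $H$-colourable, so minimal obstructions of this size do exist. The principal obstacle is the equality $\chi(\underline{G}) = \omega(\underline{G})$ in the second step: this is essentially a weighted Mirsky-type statement about $J$, and the layered colouring above is how I would prove it directly rather than invoking the perfection of comparability graphs as a black box.
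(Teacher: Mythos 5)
Your proposal is correct and follows essentially the same route as the paper: reduce to a homomorphism of underlying undirected graphs, show that $H$-colourability of a transitive $G$ is equivalent to $\omega(\underline{G})\le m$, and conclude that a minimal obstruction must be exactly a transitive digraph on an $(m+1)$-clique. The only difference is that where the paper cites the perfection of comparability graphs to obtain $\chi(\underline{G})=\omega(\underline{G})$, you prove that equality directly from the replication structure of $G$ via a Mirsky-type layered colouring, which makes the step self-contained but does not change the argument.
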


\begin{proof}
Since $H$ is symmetric, $G$ is $H$-colourable if and only if the underlying graph $G'$ of $G$ 
is $H'$-colourable. Since $G'$ is a comparability graph, and hence is perfect, it admits a
$k$-colouring, where $k$ is size of the largest clique in $G'$, i.e., $G'$ admits a homomorphism
to $K_k$, and contains a copy of $K_k$. Therefore $G'$ is $H'$-colourable if and only if $k \leq m$,
and hence all minimal transitive $H$-obstructions are semi-complete digraphs with $m+1$ vertices.
\end{proof}

On the other hand, consider the case when $H$ is an asymmetric digraph (i.e., has no digons). 
In this case we also have only finitely many transitive minimal $H$-obstructions. 

\begin{proposition}
Let $H$ be an asymmetric digraph, and let $m$ be the size of a largest transitive tournament in $H$.

Then the only transitive minimal $H$-obstructions are the digon and the transitive tournament on 
$m+1$ vertices.
\end{proposition}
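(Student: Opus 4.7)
I would first verify that the two listed digraphs really are minimal transitive $H$-obstructions, and then show that nothing else qualifies.

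The digon is not $H$-colourable because $H$ is asymmetric, and removing either of its two vertices leaves a single vertex which trivially maps into $H$. For the transitive tournament $T_{m+1}$, any homomorphism $f\colon T_{m+1}\to H$ must be injective: if $f(v_i)=f(v_j)$ with $i<j$, then the arc $v_iv_j$ would force a loop at $f(v_i)$, and asymmetry of $H$ forbids loops. The $m+1$ distinct images would then span a transitive tournament on $m+1$ vertices in $H$, contradicting the definition of $m$. Removing any vertex of $T_{m+1}$ leaves $T_m$, which embeds into the guaranteed $T_m$ inside $H$.

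Now let $G$ be an arbitrary transitive minimal $H$-obstruction. The plan is to split on whether $G$ contains a digon. If it does, then any vertex $v$ outside that digon would give $G-v$ still containing the digon, so still not $H$-colourable, contradicting minimality; hence $G$ is exactly the digon. Otherwise $G$ has no digon. Using the replication description of transitive digraphs from the introduction, every replica class must have size one, so $G$ is itself an acyclic transitive digraph, and its arcs form the strict part of a partial order on $V(G)$.

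The core step is then an explicit Mirsky-style colouring: fix a transitive tournament $h_1,\dots,h_m$ inside $H$ and define $f(v)=h_{\ell(v)}$, where $\ell(v)$ is the number of vertices of a longest chain in $G$ ending at $v$. For any arc $uv$ of $G$ one has $\ell(u)<\ell(v)$, so $f(u)f(v)=h_{\ell(u)}h_{\ell(v)}$ is an arc of $H$ provided $\ell(v)\le m$ for all $v$. Hence if $G$ is not $H$-colourable it must contain a chain on $m+1$ vertices, and by transitivity this chain spans an induced copy of $T_{m+1}$. Because $T_{m+1}$ is itself not $H$-colourable, minimality of $G$ forces its vertex set to equal this chain, so $G=T_{m+1}$.

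The main obstacle I anticipate is this digon-free case, where one has to combine the replication structure (to see that $G$ is already a partial order) with the level function $\ell$ (to build the explicit homomorphism into $h_1,\dots,h_m$); once this is set up, the minimality arguments in both cases are routine.
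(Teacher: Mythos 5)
Your proof is correct and follows essentially the same route as the paper: split on whether $G$ contains a digon, and in the digon-free case use the partial-order structure to partition $G$ into antichains indexed by longest-chain length and map level $i$ to the $i$-th vertex of a transitive tournament in $H$. Your explicit level function $\ell(v)$ just spells out what the paper compresses into an appeal to ``Dilworth's theorem'' (properly, Mirsky's theorem), and your version is if anything more careful, since it verifies that arcs respect the level ordering.
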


\begin{proof}
Obviously the digon and the transitive tournament on $m+1$ vertices are transitive minimal 
$H$-obstructions. If $G$ does not contain a digon, it is an asymmetric transitive digraph. As
noted earlier, the reflexive closure of $G$ is a partial order. By Dilworth's theorem, the vertices 
of $G$ can be covered by $k$ independent sets where $k$ is the maximum length of a chain in 
this order. Therefore, the only other minimal obstruction is the transitive tournament on $m+1$ 
vertices, where $m$ is the size of the largest transitive tournament in $H$.
\end{proof}

It follows from the previous two propositions that when $H$ is a symmetric, or an asymmetric,
digraph, the $H$-colouring problem restricted to transitive digraphs is polynomial time solvable.
As we will see later (Corollary \ref{there}), the same is not expected for the case when $H$ is
transitive. However, when $H$ is transitive and also semi-complete, we do get a polynomial time
algorithm and a finite set of minimal $H$-obstructions.

\begin{proposition}\label{semic}
Suppose $H$ is a semi-complete transitive digraph.

A transitive digraph $G$ is homomorphic to $H$ if and only if every semi-complete subgraph of $G$
is homomorphic to $H$.
\end{proposition}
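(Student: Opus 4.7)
The plan is as follows. The forward direction is immediate because homomorphisms restrict to subgraphs. For the converse I would invoke the replication structure of transitive digraphs recalled in the introduction: since $H$ is semi-complete transitive, it is obtained from a transitive tournament on vertices $1<2<\cdots<k$ by replacing the $i$-th vertex with a strong clique $C_i$ of size $m_i$; similarly $G$ comes from some poset $P$, with strong clique $S_\alpha$ of size $n_\alpha$ replacing each $\alpha\in V(P)$.

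First I would translate the hypothesis into a combinatorial scheduling statement about $P$. A semi-complete induced sub-digraph of $G$ is the blow-up of a chain of $P$ with arbitrary sub-multiplicities. Because any two distinct vertices in a source strong clique have a digon while $H$ has digons only within one $C_i$, a homomorphism of a blown-up chain with level sizes $\ell_1,\ldots,\ell_t$ into $H$ amounts to a weakly monotone $\psi:[t]\to[k]$ with $\sum_{j:\psi(j)=i}\ell_j\le m_i$ for each $i$. Specializing to $\ell_j=n_{\alpha_j}$ (the worst case), the hypothesis becomes: every chain $\alpha_1<\cdots<\alpha_t$ of $P$ admits such a weakly monotone schedule. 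For each chain $C$ I would use the greedy schedule $g_C$---process items in order, keep each in the current bin if it still fits, otherwise advance to the smallest later bin that accommodates it. A standard exchange argument shows $g_C$ is pointwise no larger than any feasible schedule, so under the hypothesis $g_C$ is well-defined and satisfies $g_C(\alpha_t)\le k$. I then set $\Phi(\alpha):=\max_C g_C(\alpha)$ over all chains $C$ of $P$ ending at $\alpha$. Then $\Phi(\alpha)\le k$, and $\Phi$ is weakly order-preserving on $P$, because for $\alpha<\beta$, appending $\beta$ to any chain witnessing $\Phi(\alpha)$ produces a chain on which greedy sends $\beta$ to a bin at least $\Phi(\alpha)$.

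The hard part will be the level-set inequality: for every chain $\beta_1<\cdots<\beta_s$ contained in $A_i:=\Phi^{-1}(i)$, the bound $\sum_j n_{\beta_j}\le m_i$ must hold. The plan is to start with a chain $C^\star$ ending at $\beta_1$ on which greedy places $\beta_1$ in bin $i$ (witnessing $\Phi(\beta_1)=i$) and successively append $\beta_2,\ldots,\beta_s$; at each step greedy cannot advance past bin $i$---else its prefix up to $\beta_j$ would be a chain ending at $\beta_j$ demonstrating $\Phi(\beta_j)>i$, contradicting $\beta_j\in A_i$---and it cannot regress below $i$ by weak monotonicity of greedy on a chain. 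Hence every $\beta_j$ is placed in bin $i$, and the feasibility of greedy forces $\sum_j n_{\beta_j}\le m_i$. Granted this, each $A_i$ has maximum weighted chain at most $m_i$; the underlying graph of $G[\bigcup_{\alpha\in A_i}S_\alpha]$, being a blow-up of a comparability graph, is perfect with clique number equal to that maximum, so it is $m_i$-colourable, yielding a homomorphism of this sub-digraph to the strong clique $C_i$. Combined with the order-preservation of $\Phi$, these level-wise maps assemble into the desired homomorphism $G\to H$.
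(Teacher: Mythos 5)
Your argument is correct, but it follows a genuinely different route from the paper's. The paper proves the nontrivial direction by induction on $|V(H)|$: it deletes one vertex $v$ from the unique initial strong component $A$ of $H$ (of size $a$) and, simultaneously, one vertex from every initial strong component of $G$ of size at most $a$, checks that the hypothesis passes to the smaller pair, and lifts the inductive homomorphism back by sending the deleted vertices to $v$. Unrolled, that induction performs implicitly the same bottom-up greedy assignment that you make explicit, but your write-up isolates the combinatorial core more cleanly: the translation of both hypothesis and conclusion into a monotone bin-packing problem on weighted chains of the underlying poset, the exchange lemma showing the greedy schedule is pointwise minimal among all feasible schedules (which is exactly what gives $\Phi\le k$ and the level-set capacity bound), and a final appeal to perfection of comparability graphs to realize each level set inside the corresponding strong clique of $H$. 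Your version is longer but avoids the delicate lift-back step of the paper (where one must check that the surviving vertices of a depleted initial component of $G$ land in $A$, so that the digons with the deleted vertex survive), and it yields the homomorphism directly rather than recursively. Two small points to tighten: state explicitly that the greedy schedule on a chain restricts to the greedy schedule on every prefix of that chain (you use this both for the order-preservation of $\Phi$ and in the level-set argument; it holds because the greedy rule consults only earlier items, but it is doing real work), and note that it suffices to consider \emph{induced} semi-complete subgraphs, which are exactly the full or partial blow-ups of chains of $P$.
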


\begin{proof}
As noted above, $H$ is obtained by replication from a transitive tournament $T$, and $G$ by replication 
from a transitive acyclic digraph $J$. Each maximal chain in $J$ yields a semi-complete subgraph $S$ of $G$.
We prove the theorem by induction on the number of vertices of $H$. Assume we have a semi-complete transitive
digraph $H$ and a transitive digraph $G$ such that each semi-complete subgraph of $G$ admits a 
homomorphism to $H$. Note that $H$ has a unique initial strong component, say $A$, with $a$ vertices.
Let $G'$ be obtained from $G$ by removing one vertex from each initial strong component $B$ which has 
$b \leq a$ vertices. Let $H'$ be obtained from $H$ by removing one vertex $v$ from the unique initial strong
component $A$. Clearly, $H'$ is a semi-complete transitive digraph, and $G'$ a transitive digraph; we claim
that each semi-complete subgraph of $G'$ is homomorphic to $H'$. Indeed, suppose $S$ is a semi-complete
subgraph  of $G'$. Since $S$ admits a homomorphism $f$ to $H$, $f$ is also a homomorphism to $H'$ unless
it takes  some $s \in S$ to $v$. It is easy to see that this implies that $S$ intersects one of the initial strong
components of size $b \le a$. Since a vertex was removed from this component, we have $S$ homomorphic
to $H'$. By induction $G'$ admits a homomorphism to $H'$ and so $G$ admits a homomorphism to $H$.
(Note that $v$ dominates all vertices of $H$.)
\end{proof}

\begin{corollary}
If $H$ is a semi-complete transitive digraph with $m$ vertices, then all the minimal transitive obstructions 
are semi-complete digraphs with at most $m+1$ vertices.
\end{corollary}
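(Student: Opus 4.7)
The plan is to derive both assertions of the corollary directly from Proposition \ref{semic}, bypassing any detailed analysis of how strong components of $G$ map into those of $H$.

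First I would argue that a minimal transitive $H$-obstruction $G$ must itself be semi-complete. Since $G$ is not $H$-colourable, Proposition \ref{semic} supplies a semi-complete subgraph $S$ of $G$ that is also not $H$-colourable. If $V(S) \subsetneq V(G)$, any $v \in V(G) \setminus V(S)$ gives $G - v \supseteq S$, which would still not be $H$-colourable, contradicting the minimality of $G$. Hence $V(S) = V(G)$, so $G$ is semi-complete.

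To bound $|V(G)|$, I would then use minimality to pick any vertex $v$ and a homomorphism $f : G - v \to H$. Because $G - v$ is an induced subgraph of the semi-complete digraph $G$, it is itself semi-complete, so any two distinct vertices $u_1, u_2 \in V(G-v)$ are joined by an arc, say $u_1 u_2$. Then $f(u_1) f(u_2)$ must be an arc of $H$; since $H$ has no loops (by the loopless convention for transitive digraphs recalled in the paper), this forces $f(u_1) \neq f(u_2)$. Hence $f$ is injective, yielding $|V(G-v)| \leq |V(H)| = m$ and therefore $|V(G)| \leq m + 1$.

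The main conceptual point is recognising that injectivity of $f$ is automatic here: it rests only on semi-completeness of $G-v$ together with the absence of loops in $H$. Once this observation is in hand, the bound falls out immediately, with no need for any packing or matching analysis of the component structure.
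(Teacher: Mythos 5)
Your proof is correct and is essentially the argument the paper intends: the corollary is stated without proof as an immediate consequence of Proposition \ref{semic}, and your two steps (minimality forces the non-$H$-colourable semi-complete subgraph supplied by the proposition to be spanning, hence $G$ is semi-complete; and a homomorphism from the semi-complete digraph $G-v$ to the loopless digraph $H$ is automatically injective, giving $|V(G)|\leq m+1$) are exactly the natural way to fill in that derivation. No gaps.
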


We show below that we need $H$ to be both transitive and semi-complete to have finite number of minimal
transitive obstructions.

Up to this point, we have always seen polynomial algorithms and finite sets of minimal obstructions go
hand in hand, for transitive inputs. There are nevertheless digraphs $H$ that have infinitely many transitive 
minimal obstructions, but can be solved in polynomial time for transitive inputs.

\begin{figure}
\begin{center}
\includegraphics{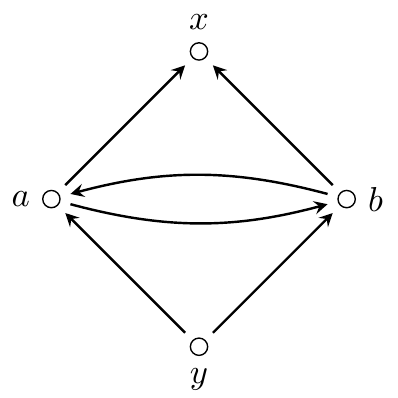}
\end{center}\caption{The digraph $H$} \label{digH}
\end{figure}

\begin{proposition}
The digraph $H$ in Figure \ref{digH} has infinitely many minimal $H$-obstructions.

However, the $H$-colouring problem is polynomial-time solvable.
\end{proposition}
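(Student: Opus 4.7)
My approach uses the replication structure of transitive digraphs established earlier in the paper: write $G$ as coming from an acyclic transitive skeleton $J$ in which each $j \in V(J)$ is blown up into a strong clique of size $k_j$. An $H$-colouring of $G$ then corresponds to choosing, for each $j \in V(J)$, a strong clique $C_j$ of $H$ with $|C_j| \ge k_j$, subject to the constraint that whenever $ij \in E(J)$, every vertex of $C_i$ has an arc in $H$ to every vertex of $C_j$. Both halves of the statement will be attacked through this reformulation.

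For the infinitely many minimal obstructions, I would inspect $H$ from Figure~\ref{digH} to locate a ``forcing'' phenomenon, i.e., a pair of local configurations in $H$ such that along a long enough directed path or alternating structure in $J$, consecutive choices of image cliques propagate a constraint (a ``colour'' or ``parity'') that eventually conflicts with itself, in the spirit of the odd-cycle obstructions in the $K_2$ case. With such a mechanism in hand, I would build a family $\{G_n\}_{n \ge 1}$ whose skeletons $J_n$ grow in length, prove non-$H$-colourability by forward propagation along the chain, and prove minimality by exhibiting, for each removable vertex $v$, an explicit $H$-colouring of $G_n - v$ obtained by splitting the chain at $v$ and colouring the two pieces independently. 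A mild extra check is needed to ensure the $G_n$ are pairwise incomparable, so that infinitely many minimal obstructions actually result.

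For the polynomial-time algorithm, I would solve the induced constraint problem on the acyclic skeleton $J$ by arc-consistency propagation in topological order: since $H$ is fixed, there are only constantly many admissible image cliques per vertex of $J$, so lists can be initialised from $k_j$ in constant time per vertex and pruned in constant time per arc of $J$. If list propagation alone is not complete, I would encode the residual constraints as a 2-SAT instance over pairs (vertex of $J$, image clique of $H$). The main obstacle I anticipate is precisely this completeness step: the existence of an infinite family of obstructions shows that no purely local consistency argument can settle colourability, so a global structural property of $H$ (for instance a lattice or dismantlability structure on its strong cliques) must be invoked to justify that the residual problem falls into a tractable CSP template rather than an NP-hard one.
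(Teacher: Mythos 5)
There is a genuine gap in both halves of your proposal. For the first half, the statement requires actually exhibiting an infinite family of minimal $H$-obstructions (the paper does this explicitly, in Figure \ref{inffam}, and verifies minimality directly). Your plan only posits that some ``forcing phenomenon'' should exist in $H$ and that a family $\{G_n\}$ could then be built; you never identify the mechanism in the specific digraph $H$, never construct any $G_n$, and never carry out the minimality check. As written, nothing is proved. Your opening reformulation is also slightly off: under a homomorphism the strong clique replacing $j \in V(J)$ maps \emph{injectively} onto a strong clique of $H$ (since $H$ is loopless), and the arc condition is required only between the actual image sets, so the correct statement is that one chooses the images $C_j$ themselves, not supersets with $|C_j| \ge k_j$.

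For the second half, you correctly guess that 2-SAT is the right target, but you explicitly leave open the one step that constitutes the entire proof, namely why the residual problem after propagation is complete and tractable. The paper's argument is concrete and quite different in structure: it first rejects any input containing a strong clique on three vertices or a semi-complete subgraph on four vertices (these are never $H$-colourable), which forces every strong component to have at most two vertices; it then contracts one vertex from each digon, obtaining an acyclic digraph in which every vertex is a source, a sink, or an ``intermediate'' vertex, with no arcs between intermediates. A single Boolean variable per source/sink (encoding ``maps into the digon $ab$'' versus ``maps to $x$ or $y$'') together with three explicit clause types then captures $H$-colourability exactly, and correctness is verified in both directions using the fact that the vertices assigned to the digon induce a bipartite subgraph. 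Your generic ``2-SAT over (vertex, image-clique) pairs'' is not shown to be expressible with two literals per clause, and your own remark that the infinitude of obstructions defeats local consistency is precisely the obstacle you would need to overcome; the proposal acknowledges it but does not resolve it.
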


\begin{figure}
\begin{center}
\includegraphics{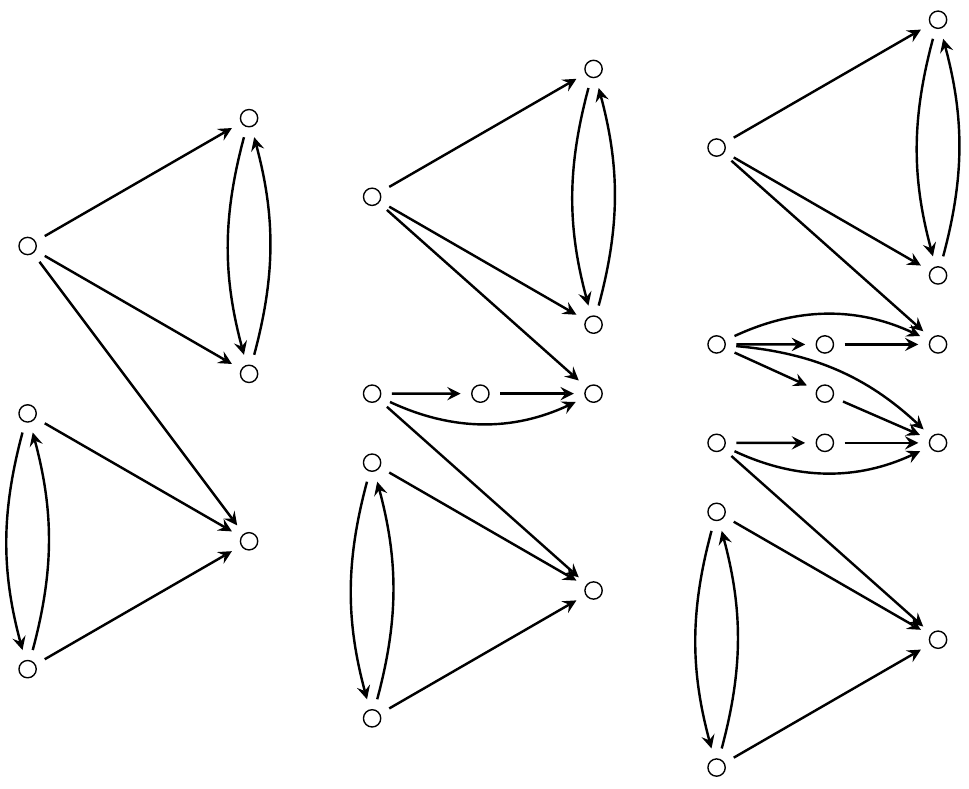}
\end{center}\caption{An infinite family of transitive minimal $H$-obstructions} \label{inffam}
\end{figure}

\begin{proof}
All the digraphs in Figure \ref{inffam} are easily checked to be transitive minimal $H$-obstructions.

We show that the $H$-colouring problem can be solved in polynomial time, by reduction to 2-SAT.
We assume that $G$ has no strong clique of size three or semi-complete digraph of size four, otherwise
$G$ is definitely not $H$-colourable. Moreover, we may assume that $G$ has no isolated vertices. Now
each strong component has at most two vertices. We remove one vertex from each digon, obtaining an
acyclic transitive digraph $G'$, in which each vertex is either a source (in-degree zero), or a sink (out-degree 
zero), or an "intermediate" vertex, i.e., a vertex with at least one in-neighhbour and at least one out-neighbour
(where all the in-neighbours are sources and all out-neighbours are sinks). Note that the absence of semi-complete 
digraphs of size four implies that no "intermediate" vertex is the result of a digon contraction, and there are no arcs 
between two "intermediate" vertices. We associate a variable $x_w$ with each source and sink $w$ in $G'$. The
intended meaning for $x_w=0$ will be that $w$ maps to $a$ or $b$, and $x_w=1$ will mean $w$ maps to $x$
if $w$ is a sink, and maps to $y$ if $w$ is a source. With this in mind, we set up a constraint for each arc 
$uv$ in $G'$ where $u$ is a source and $v$ is a sink as follows.

\begin{itemize}
\item
If $w$ is a contracted digon, then we add the constraint $x_w=0$.
\item
If neither $u$ nor $v$ is  a contracted digon then we add the constraint 
$\overline{x_u} \vee \overline{x_v}$.
\item
If there is an "intermediate" vertex $w$ that dominates $v$ and is dominated by $u$, in the original digraph $G$, 
then we add the constraint $x_u \vee x_v$.
\end{itemize}

Note that the third condition implies that if one of $u, v$ is a contracted digon, then the variable of the other vertex
obtains the value $1$.

If $G$ is homomorphic to $H$, then so is $G'$, and setting the variable values as suggested by their intended
meaning will clearly satisfy all constraints. On the other hand, if we have a satisfying assignment, we may
define a homomorphism of $G$ to $H$ as follows. Any vertex $w$ with $x_w=1$ will map to $x$ if it
is a sink and to $y$ if it is a source. It is easy to see that the set of vertices $w$ that are either ``intermediate''
vertices or have $x_w=0$ induces a bipartite subgraph of $G$ (because of the third set of constraints).
Therefore all the remaining vertices can be mapped to the digon $ab$.
\end{proof}

The minimal obstructions to 2-SAT are well understood \cite{papa}. It is not difficult to conclude from these
that all the minimal obstructions for this digraph $H$ are, in addition to the strong clique with three vertices
and all transitive semi-complete digraphs on four vertices (not containing a strong clique on three vertices),
only digraphs containing as spanning subgraphs the digraphs in Figure \ref{inffam}.

We note that the same infinite family in Figure \ref{inffam} is also a family of minimal $H^*$-obstructions 
for the semi-complete digraph $H^*$ obtained from $H$ by adding the arc $xy$. This shows that we 
cannot weaken Proposition \ref{semic} by removing the assumption that $H$ is transitive. (Corollary
\ref{there} below shows that we can not remove the other assumption, that $H$ be semi-complete.)

\vspace{2mm}

Next we note that there are many digraphs $H$ that yield NP-complete $H$-colouring problem even 
when restricted to transitive digraphs.

We consider the following construction. Let $H$ be a bipartite graph with at most $n$ black 
and at most $n$ white vertices. We form the digraph $H'$ as follows. We first orient all edges 
of $H$ from the white vertices to the black vertices. Let $P_i$ be a directed path with $n+2$
vertices, in which the first, and the $i+1$-st, vertex have been duplicated (as described in the 
introduction). Let $R_i$ also be a directed path with $n+2$ vertices, in which the the last, 
and the $(i+1)$-st, vertex have been duplicated. We identify the last vertex of each $P_i$ with
the $i$-th white vertex (if any) of $H$ and the first vertex of each $R_i$ with the $i$-th black
vertex (if any) of $H$. Then $H'$ is obtained from the resulting digraph by taking the transitive
closure. It is easy to see that the added paths ensure that the only homomorphism of $H'$ to
itself is the identity. Also consider a directed path $P$ with $n+2$ vertices with only the first
vertex duplicated, and a directed path $R$ with $n+2$ vertices and only the last vertex duplicated.
Note that $P$ admits a homomorphism to each $P_i$ and $R$ admits a homomorphism to each
$R_i$. For future reference, we define the {\em level} of the $j$-th vertex of $P$ or $P_i$ to be
$j$, and the {\em level} of the $j$-th vertex of $R$ or $R_i$ to be $n+2+j$; in this we assume
the duplicated vertices to have the same level. Note that this makes all white vertices to have
level $n+2$ and all black vertices to have level $n+3$.

\begin{figure}
\begin{center}
\includegraphics{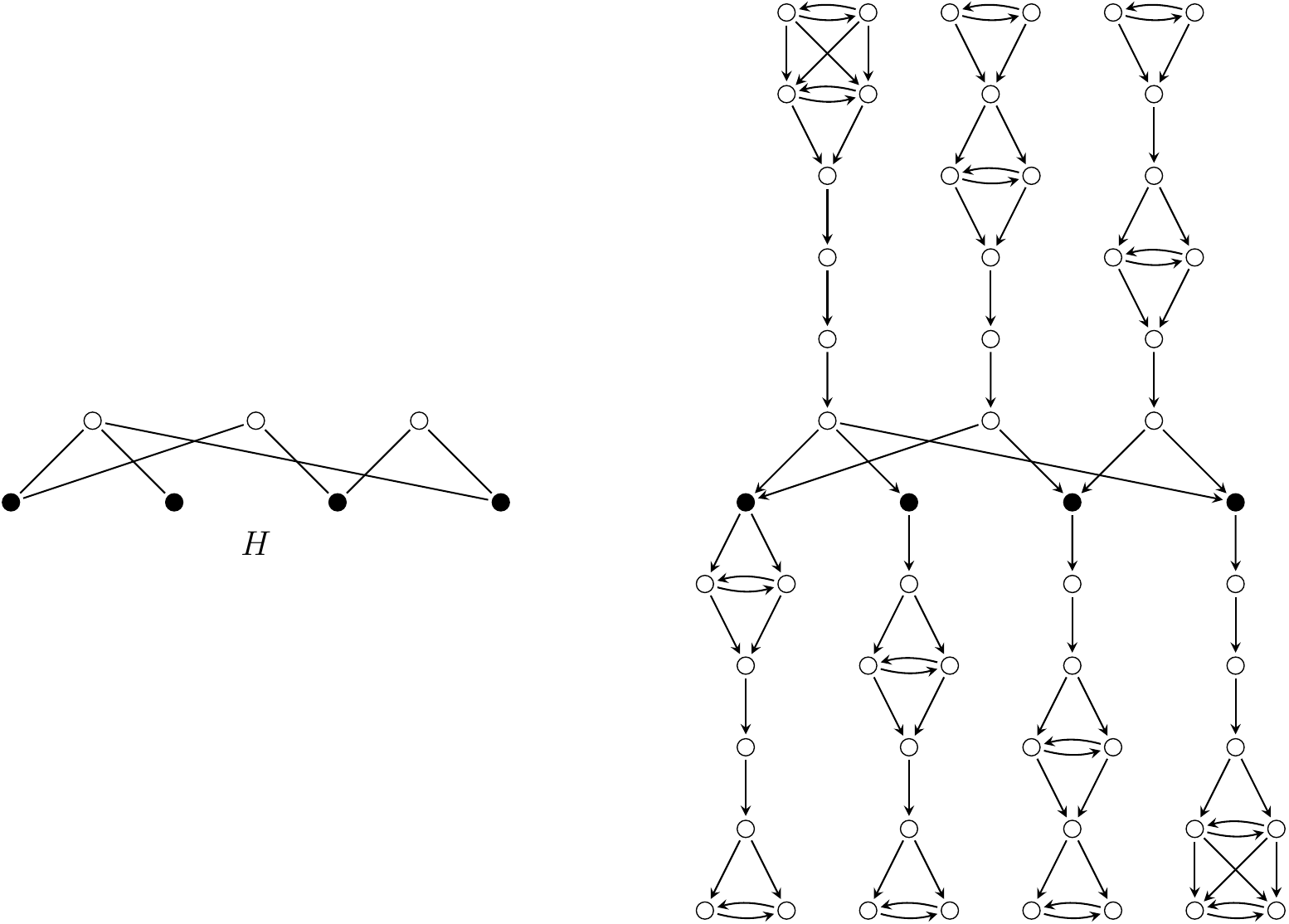}
\end{center}\caption{Example of the construction of $H'$ (without the transitive closure)} \label{constr}
\end{figure}

\begin{theorem}\label{here}
If $H$ is a bipartite graph such that the $H$-retraction problem is NP-complete, then the 
$H'$-colouring problem is NP-complete, even when restricted to transitive digraphs.
\end{theorem}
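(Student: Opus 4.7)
The plan is to reduce $H$-retraction (for the given bipartite graph $H$) to $H'$-colouring on transitive inputs. An instance of $H$-retraction is a bipartite graph $G$ in which some white vertices of $G$ are pre-assigned to white vertices of $H$ and some black vertices to black vertices of $H$ (after polynomial preprocessing to ensure $G$ is bipartite with a partition consistent with the pre-assignments). I construct a transitive digraph $G^*$ that mirrors the definition of $H'$: orient each edge of $G$ from its white end to its black end; attach to each white vertex $u$ a fresh copy of $P_i$ (identifying its last vertex with $u$) if $u$ is pre-assigned to $w_i$, otherwise a fresh copy of $P$; symmetrically attach $R_j$ or $R$ to each black vertex by identifying the first vertex of the gadget with it; and finally take the transitive closure. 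Because every arc strictly increases the level function from the paper, $G^*$ is acyclic and hence transitive.

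Forward direction. Any $H$-retraction $f: G \to H$ extends to a homomorphism $G^* \to H'$ by sending each attached $P_i$-gadget at $u$ isomorphically onto the $P_i$-gadget attached to $w_i = f(u)$ in $H'$, each attached $P$-gadget at $u$ (where $f(u) = w_k$) via the natural homomorphism $P \to P_k$ onto the $P_k$-gadget at $w_k$, and symmetrically on the black side. Since $H'$ is already transitively closed, the extension automatically respects every arc that the transitive closure introduced in $G^*$.

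Backward direction. Given a homomorphism $g: G^* \to H'$, let $f := g|_{V(G)}$. The essential step is the rigidity claim: for every non-isolated $u \in V(G)$, $g(u)$ lies in $V(H) \subseteq V(H')$, has the same colour as $u$, and equals $w_i$ (respectively $b_j$) whenever $u$ is pre-assigned to $w_i$ (respectively $b_j$); isolated pre-assigned vertices place no constraints and can be corrected by hand. Granted the rigidity, each edge $uv$ of $G$ maps to an arc $g(u)g(v)$ of $H'$ between a white and a black vertex of $H$, which by the construction of $H'$ is itself an edge of $H$, so $f$ is a homomorphism $G \to H$ respecting the pre-assignments.

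The main obstacle is proving the rigidity claim. Two features of $H'$ do the heavy lifting. First, the only digons of $H'$ are the duplications in the $P_k$- and $R_k$-gadgets. Second, the ``second-position'' duplications uniquely identify each gadget: the second duplication of $P_k$ is the unique $H'$-digon at level $k+1$, and the second duplication of $R_k$ is the unique $H'$-digon at level $n+3+k$. The gadget attached to $u$ in $G^*$ carries the same digon pattern as $P_i$ (or $P$), and under $g$ each of its digons must be sent to an $H'$-digon. Using the chain of length $i$ between the two digons of a $P_i$-gadget, together with the fact that chains of arcs in $H'$ have non-decreasing levels and cross between gadgets only via the $H$-edges, one deduces that the second digon must land on the second duplication of $P_i$ itself, which is attached to $w_i$; this pins $g(u)$ to $w_i$. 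For a $P$-gadget the argument is one-sided: the unique digon lands on some first-position duplication of a $P_k$, and the chain that follows pins $g(u)$ to the corresponding white vertex $w_k$. A symmetric analysis on the black side with the $R_j$- and $R$-gadgets completes the proof.
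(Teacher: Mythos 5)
Your construction and overall strategy coincide with the paper's: orient $G$, attach $P_i$/$R_j$ gadgets to the pre-assigned vertices and $P$/$R$ gadgets to the rest, take the transitive closure, and prove the two directions of the equivalence. (A small slip: $G^*$ is not acyclic, since the duplications create digons; it is transitive because its strong components are these digons and the condensation is transitively closed.) The forward direction is fine. The genuine gap is the rigidity claim in the backward direction: it is \emph{false} that every non-isolated $u\in V(G)$ has $g(u)\in V(H)$ with the correct colour. The chain of singletons following the initial digon of a $P$-gadget need not advance one level per step, because two \emph{consecutive} singletons may be sent to the two vertices of a single digon of $H'$ (homomorphisms need not preserve non-arcs). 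Concretely, take $n=1$, $H$ a single edge $w_1b_1$, and an edge $uv$ of $G$ with $u,v\notin V(H)$. Writing the $P$-gadget at $u$ as the digon $\{a,a'\}$ followed by $c_2$ and $u$, and $P_1$ in $H'$ as the digon $\{p_1,p_1'\}$ followed by the digon $\{p_2,p_2'\}$ and $w_1$, the assignment $a\mapsto p_1$, $a'\mapsto p_1'$, $c_2\mapsto p_2$, $u\mapsto p_2'$, $v\mapsto w_1$, with $v$'s $R$-gadget then shifted one level down into $b_1$ and the middle digon of $R_1$, is a valid homomorphism. Here $g(u)$ is an internal gadget vertex and $g(v)$ is white, so your claim that ``each edge $uv$ of $G$ maps to an edge of $H$'' breaks. (The paper's own one-line justification, that every homomorphism $G'\to H'$ preserves levels, overstates matters in exactly the same way.)

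The theorem survives and your argument is repairable. What \emph{is} pinned down exactly are the pre-assigned vertices: for a $P_i$-gadget the chain running through its two digons, $w_i$, a black neighbour $b_j$, and down through the two digons of $R_j$ has $2n+4$ positions containing $4$ digons; since a chain of $H'$ can meet at most one $P_k$-gadget and one $R_{j'}$-gadget, hence at most $4$ digons, no ``lingering'' is available, every step must raise the level by exactly one, and one gets $k=i$, $j'=j$, $g(w_i)=w_i$, $g(b_j)=b_j$. For unassigned vertices you should settle for the weaker, true statement that $g(u)$ lies in a unique $P_k$-gadget or in a unique set $\{b_j\}\cup R_j$, and extract the retraction from that coarser information: every arc of $H'$ from a $P_k$-gadget vertex to an $R_j$-gadget vertex certifies $w_kb_j\in E(H)$, all in-neighbours of a $P_k$-gadget vertex lie in that same gadget, and all out-neighbours of an $R_j$-gadget vertex lie in that same gadget; so sending each vertex to the $H$-vertex governing its gadget (or, in the colour-mismatched cases, to an arbitrary $H$-neighbour of that vertex) is well defined and yields the required retraction.
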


\begin {proof} 
Suppose $G$ is an instance of the $H$-retraction problem, i.e., a bipartite graph 
containing $H$ as a subgraph with lists $\{x\}$ for each (black and white) vertex $x$ of $H$, 
and lists $V(H)$ for all other  (black and white) vertices of $G$. We construct an instance $G'$ 
of the $H'$-colouring problem by orienting all edges of $G$ from the white vertices to the black 
vertices, attaching paths $P_i$ and $R_j$ to the vertices of $H$ as in the construction of $H'$,
and then (for the vertices not in $H$) we identify the last vertex of a (separate) copy of $P$ to 
each white vertex of $G$ not in $H$, and identify the first vertex of a (separate) copy of $R$ to 
each black vertex of $G$ not in $H$, and finally we take a transitive closure. Now it is easy to 
see that each homomorphism of $G'$ to $H'$ preserves the level of vertices, and that $G'$ 
admits an $H'$-colouring if and only if $G$ admits a retraction to $H$.
\end{proof}

We now observe that the construction of $H'$ ensures that it is itself transitive. Thus we have
the following fact.

\begin{corollary}\label{there}
There exists a transitive digraph $H'$ such that the $H'$-colouring problem is NP-complete
even when restricted to transitive digraphs.
\end{corollary}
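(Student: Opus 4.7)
The plan is to derive the corollary directly from Theorem \ref{here}, combined with any known example of a bipartite graph $H$ whose retraction problem is NP-complete; such examples are furnished by the existing dichotomy theory for list homomorphisms and retractions of bipartite graphs (for instance, bipartite graphs lying outside the bi-arc / interval bigraph classes). For any such $H$, Theorem \ref{here} immediately produces a digraph $H'$ for which the $H'$-colouring problem is NP-complete even on transitive inputs. Membership in NP is standard, so only NP-hardness is at stake, and that is exactly what Theorem \ref{here} supplies.

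The only further task is to verify that $H'$ is itself a transitive digraph, as announced in the sentence preceding the corollary. This should be automatic from the construction: $H'$ was defined as the transitive closure (with the loops discarded, per the convention recorded in the introduction) of the digraph obtained from $H$ oriented from white to black, together with the attached paths $P_i$ and $R_i$ (themselves built by duplicating certain vertices into strong cliques). To check transitivity, I would take three distinct vertices $u, v, w$ with $uv$ and $vw$ arcs of $H'$. Then $(u,v)$ and $(v,w)$ lie in the relational transitive closure of the underlying digraph, so $(u,w)$ does too; since $u \neq w$, the pair $(u,w)$ is not a loop, hence survives loop removal and gives an arc $uw$ of $H'$.

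I do not expect any real obstacle. The one subtle point worth flagging is the peculiarity of the paper's definition, namely that a transitive digraph may contain directed cycles through digons inside the strong cliques created by replication; the duplications used to form the strong cliques in $P_i$ and $R_i$ do introduce such cycles, and it is precisely the loop-removal step after transitive closure that keeps these consistent with the graph-theoretic transitivity condition. Once this is noted, the corollary follows with no additional work beyond invoking a single NP-hard bipartite retraction instance.
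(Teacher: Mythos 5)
Your proposal is correct and matches the paper's own (very brief) justification: the paper simply observes that the construction of $H'$ yields a transitive digraph, since it is obtained as a transitive closure, and then applies Theorem \ref{here} to any bipartite $H$ with NP-complete retraction problem. Your additional verification of transitivity (non-loop pairs in the relational transitive closure survive loop removal) and your appeal to known NP-complete bipartite retraction instances fill in exactly the details the paper leaves implicit.
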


Note that each $H$-colouring problem is a matrix partition problem, for a matrix $M$ 
without $1$'s. (If we view all arcs of a digraph to be weak arcs of a trigraph, we obtain the same problem.) 
In the next section, we shall see that more general matrix partition problems exhibit behaviour as complex 
as any constraint satisfaction problem.

\section{Matrix Partitions}

%WHAT IF THE RESTRICTION IS TO ACYCLIC TRANSITIVE DIGRAPHS? (I.e., digon-free)

One of the foremost open problems in this area is the conjecture of Feder and Vardi, which asserts
that for each relational structure $H$ (as defined above) the constraint satisfaction problem to $H$
is polynomial time solvable or NP-complete \cite{fv}. It was shown in \cite{fv} that for any such
relational structure $H$ there exists a bipartite graph $H'$ such the constraint satisfaction problem
for $H$ is polynomially equivalent to the $H'$-retraction problem. This implies that classifying the
complexity of bipartite retraction problems is at least as hard as classifying the complexity of all 
constraint satisfaction problems (i.e., very hard). We prove that a similar result holds also for
matrix partition problems of transitive digraphs.

\begin{theorem}\label{csp}
For any relational structure $H$, there exists a matrix $M$ such that the constraint satisfaction 
problem for $H$ is polynomially equivalent to the $M$-partition problem for transitive digraphs.
\end{theorem}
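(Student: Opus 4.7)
The approach is to combine the Feder--Vardi theorem, invoked in the paragraph preceding the statement, with the construction used in Theorem~\ref{here}. By Feder--Vardi, for the given relational structure $H$ there is a bipartite graph $\tilde H$ such that the constraint satisfaction problem for $H$ is polynomially equivalent to the $\tilde H$-retraction problem; it therefore suffices to exhibit a matrix $M$ for which the $M$-partition problem on transitive digraphs is polynomially equivalent to $\tilde H$-retraction.

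Starting from $\tilde H$, apply the construction described just before Theorem~\ref{here}: orient each edge of $\tilde H$ from white to black, attach the path gadgets $P_i$ to the $i$-th white vertex and $R_j$ to the $j$-th black vertex, and take the transitive closure. Call the resulting transitive digraph $\tilde H'$ and let $M$ be the matrix corresponding to $\tilde H'$ read as a trigraph in which every arc is weak ($*$) and every non-arc is $0$. Then an $M$-partition of a digraph $T$ is precisely a homomorphism $T\to\tilde H'$, so the $M$-partition problem on transitive inputs coincides with the $\tilde H'$-colouring problem on transitive digraphs. The forward reduction, from $\tilde H$-retraction to $M$-partition on transitive digraphs, is Theorem~\ref{here} verbatim, since the instance produced there is already transitive.

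The new content lies in the backward reduction, from $M$-partition on transitive digraphs to $\tilde H$-retraction. Given a transitive digraph $T$, any homomorphism $T\to\tilde H'$ must preserve the levels used in the construction, so in polynomial time the set of admissible levels of each vertex of $T$ can be computed from the lengths of the maximal directed paths through it. Vertices forced onto the ``pure path'' levels $1,\dots,n+1$ or $n+4,\dots,2n+4$ are handled greedily, since an attached in- or out-tail of length close to $n+1$ pins the choice of $P_i$ or $R_j$ uniquely. The vertices remaining at levels $n+2$ and $n+3$ induce a bipartite subgraph $B$ of $T$; those whose attached path is long and distinctive are pinned to specific white or black vertices of $\tilde H\subseteq\tilde H'$, while the others are free to use any vertex of $\tilde H$ of the correct colour. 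This is a list homomorphism into $\tilde H$ in which every list is either a singleton or $V(\tilde H)$, i.e., an $\tilde H$-retraction instance.

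The main obstacle will be verifying that the rigidity of $\tilde H'$---the fact, noted in the construction, that its distinguishing paths force the only endomorphism of $\tilde H'$ to be the identity---yields enough pinning when the input is an arbitrary transitive digraph, rather than one of the special instances produced in Theorem~\ref{here}. In particular, vertices of $T$ whose attached paths are short, malformed, or replicated in unexpected ways must be handled carefully, rejecting the instance whenever no valid level assignment exists and otherwise producing a polynomially bounded $\tilde H$-retraction instance. Once this level-tracing step is made rigorous, $M$-partition on transitive digraphs is polynomially equivalent to $\tilde H$-retraction and hence, via Feder--Vardi, to the constraint satisfaction problem for $H$.
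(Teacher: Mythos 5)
Your forward reduction is fine, but the backward reduction is a genuine gap, and it is precisely the gap the paper is organized around. Because you take $M$ to be the adjacency matrix of $\tilde H'$ with every arc weak and every non-arc $0$, your $M$ has no $1$'s, so the $M$-partition problem is just the $\tilde H'$-colouring problem for a digraph $\tilde H'$. Establishing that \emph{this} problem (on transitive inputs) reduces back to CSP$(H)$ is exactly the open problem the authors pose immediately after Theorem \ref{M1}; they explicitly say they do not know whether a digraph target suffices. The level-tracing argument you sketch does not close this. In an arbitrary transitive input $T$ the in- and out-paths through a vertex may be short or absent, so a vertex typically has \emph{many} admissible levels, and even at a fixed level its admissible images form an arbitrary subset of that level (constrained by whatever partial tails and digons happen to be attached). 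After propagating these constraints you are left with a general list homomorphism instance to $\tilde H'$ with arbitrary lists --- not an instance of $\tilde H$-retraction, where every list must be a singleton or all of $V(\tilde H)$. A list homomorphism problem to $\tilde H'$ is some CSP, but there is no reason it is polynomially reducible to CSP$(H)$ specifically, which is what the equivalence in Theorem \ref{csp} demands. ``Handled greedily'' and ``pinned uniquely'' only hold for the highly structured instances produced in the proof of Theorem \ref{here}, not for arbitrary transitive inputs.

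The paper's proof avoids this entirely by putting $1$'s into $M$: each black vertex $v$ of $\tilde H$ is replaced by $v_1,v_2,v_3$ with a \emph{strong} digon $v_1v_2,v_2v_1$ (no path gadgets at all). The strong arcs force the preimage of each strong digon to be a complete bipartite union of digons, hence distinct digons of a transitive input must map to \emph{distinct} strong digons of $H'$. Since $H'$ is fixed, an input with more digons than $H'$ has strong digons is rejected outright, and otherwise there are only a constant number of injective assignments of the input's digons to the strong digons of $H'$; each assignment contracts to an $\tilde H$-retraction instance, giving a polynomial Turing reduction. If you want to salvage your approach, you must either introduce $1$'s into $M$ to get this forced injectivity, or solve the authors' open problem.
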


\begin{proof}
According to \cite{fv}, we may start with the $H$-retraction problem for a bipartite graph $H$.
It also follows from \cite{fv} that we may assume that the white vertices of $H$ are domination-free, 
i.e., that their black neighbourhoods are free of inclusion. In that case the $H$-retraction problem 
may be assumed to have singleton lists $\{x\}$ only on the black vertices $x$ of $H$, with all other 
vertices of $G$ (the input graph containing $H$) having lists $V(H)$ (see \cite{fv}).

We now describe how to transform this bipartite graph $H$ into a trigraph $H'$ such that the 
above $H$-retraction problem is polynomially equivalent to the $H'$-colouring problem. (The
matrix $M$ for the theorem will then be the matrix describing the trigraph $H'$.)

First, we orient all edges of $H$ from white to black vertices, and make them weak arcs of $H'$.
Now replace each black vertex $v$ of $H$ by three vertices $v_1, v_2, v_3$ and two strong arcs
$v_1v_2, v_2v_1$. All other (white) vertices of $H$ will have the same (weak) arcs to each $v_i$ 
as they had to $v$. Let $M$ be the matrix corresponding to $H'$.

Suppose $G$ is an instance of the $H$-retraction problem, i.e., a bipartite graph containing $H$, 
with each black vertex $x$ of $H$ having the list $\{x\}$; all other vertices, including the white 
vertices of $H$, have the list $V(H)$. Then construct a transitive digraph $G'$ by orienting all 
edges of $G$ from white to black vertices, and duplicating in $G$ each black vertex $x$ of $H$
into a digon $x_1x_2, x_2x_1$. (This is a replicating operation as we have introduced it earlier.)
We now claim that $G$ admits a list $H$-colouring (with these lists, i.e., fixing all black vertices 
of $H$) if and only if $G'$ admits an $H'$-colouring, i.e., an $M$-partition. Indeed, if $G$ has 
an $H$-colouring $f$ with $f(x)=x$ for each black vertex $x$ of $H$, then $f(y)=y$ also for 
each white vertex $y$ of $H$, since $H$ is domination-free. We may assume that $f$ maps
white vertices of $G$ to white vertices of $H$ and black vertices of $G$ to black vertices of $H$,
and define an $H'$-colouring $f'$ of $G'$ (where $H'$ is the trigraph described above), by setting 

\begin{itemize}
\item
$f'(x)=f(x)$ if $x$ is a white vertex of $G$
\item
$f'(x)=v_3$ if $x$ is a black vertex of $G$ not in $H$ and $f(x)=v$ is a black vertex of $H$
\item
$f'(x_1)=x_1, f'(x_2)=x_2$ if $x$ is a black vertex of $H$ (in $G$).
\end{itemize}

It is easy to verify that $f'$ is indeed an $H'$-colouring of $G'$. Conversely, suppose that the
digraph $G'$ admits an $H'$-colouring. Note that each digon in the digraph $G'$ must map to 
a digon in the trigraph $H'$; since all digons in $H'$ have strong edges, it is easy to see that 
different digons in $G'$ must map to different digons of $H'$. (The preimage of a strong digon
must be a complete bipartite graph consisting of digons in $G'$.) This yields a homomorphism 
$f$ of $G$ to $H$ in which the subgraph $H$ of $G$ maps bijectively to $H$ (here we use again 
the fact that $H$ is domination-free). It is easy to see it corresponds to an automorphism of $H$. 
By permuting the images of $f$ on $H$ (by the inverse of the above automorphism) we obtain an 
$H$-retraction of $G$ as required. This proves the above claim, and establishes a polynomial 
(Karp) reduction from the $H$-retraction problem for bipartite graphs to the $H'$-colouring, 
i.e., $M$-partition, problem, for transitive digraphs.

We now establish a Turing reduction from the $H'$-colouring problem for transitive digraphs to 
polynomially many $H$-retraction problems, with varying lists on $G$. Thus suppose $G'$ is a 
transitive digraph instance of the $H'$-colouring problem. Because of the transitivity of $G'$, its 
digons must form disjoint strong cliques, each of which must map to a distinct digon of $H'$,
as before. For each assignment of the strong cliques of $G'$ to the strong digons of $H'$, we
take $G'$, identify each strong clique $C$ to a vertex, and call the resulting graph $G$. The new
vertex will have a list $\{x\}$, where $C$ was assigned to the digon $x_1, x_2$ in $H'$. Then it
is clear that $G'$ is $H'$-colourable by a colouring corresponding to the given assignment, if and
only if $G$ admits an $H$-retraction. Since $H$ is fixed, there are only polynomially many such
assignments of strong cliques of $G'$ to the digons of $H'$, and $G'$ is $H'$-colourable if and
only if for at least one of these assignments $G$ is $H$-colourable.
\end{proof}

We note that the matrix $M$ in the above construction has zero diagonal. (There are no loops in
$H'$.) By contrast, we will show below that when $M$ has a diagonal of $1$'s, the problem is 
always polynomial, and in fact has only finitely many transitive minimal $M$-obstructions.

\begin{theorem} \label{M1}
Let $M$ be an $m$ by $m$ matrix that has only $1$'s on the main diagonal.   Then every 
transitive minimal $M$-obstruction is an acyclic digraph with at most $m+1$ vertices.
\end{theorem}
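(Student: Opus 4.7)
The plan is to prove this in two stages: first, show that any transitive minimal $M$-obstruction must be acyclic; second, invoke a pigeonhole argument for the size bound. The main obstacle is the first stage, which rests on the replication description of transitive digraphs recalled in the introduction.

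Suppose for contradiction that a transitive minimal $M$-obstruction $G$ has a strong component $C$ with $|C|\geq 2$, and fix $v\in C$. By minimality, $G-v$ admits an $M$-partition $V_1,\dots,V_m$. Choose $u\in C\setminus\{v\}$ and let $i$ be the part with $u\in V_i$; the goal is to show that adding $v$ to $V_i$ yields an $M$-partition of $G$, contradicting that $G$ is an obstruction. The strong-clique condition on $V_i\cup\{v\}$ holds because every $w\in V_i$ has a digon with $u$ in $G-v$, and so lies in the strong component of $G-v$ containing $u$, namely $C\setminus\{v\}$; thus $w$ and $v$ both lie in $C$ and $vw$ is a digon in $G$. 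For each other part $V_j$, the entry $M(i,j)$ remains respected: a vertex $w\in V_j\setminus C$ has the same arc relation with $v$ as with $u$ by replication, while a vertex $w\in V_j\cap C$ (if any) forces both $M(i,j)$ and $M(j,i)$ to lie in $\{*,1\}$, because the digon $uw$ must already be compatible with the partition, and in $G$ the digon $vw$ is then likewise allowed. The verification for $M(j,i)$ is symmetric. This contradiction forces $G$ to be acyclic.

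Since $G$ is now acyclic it has no digons, so every strong clique in $G$ is a singleton; because $M$ has only $1$s on the diagonal, every part of an $M$-partition of $G$ must be a strong clique and hence a singleton. An $M$-partition is therefore an injection $V(G)\to\{1,\dots,m\}$ respecting the off-diagonal arc constraints. By pigeonhole, if $|V(G)|\geq m+1$ then no such injection exists and $G$ is not $M$-partitionable; and if $|V(G)|\geq m+2$ then each $G-v$ still has at least $m+1$ vertices and is likewise not $M$-partitionable, contradicting minimality. Hence $|V(G)|\leq m+1$, as required.
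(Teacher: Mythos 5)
Your proof is correct and follows essentially the same route as the paper's: first rule out digons by reassigning a vertex to the part of its strong-component twin (using the replication structure), then use the fact that in an acyclic digraph all strong cliques are singletons, so an all-ones diagonal forces at most $m$ vertices. The only difference is that you spell out in detail the twin-reassignment step that the paper dismisses with ``clearly,'' which is a welcome expansion rather than a deviation.
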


\begin{proof}
Let $G$ be a transitive digraph that is not $M$-partionable. If $G$ is not acyclic, then $G$ has 
a proper subgraph obtained by removing a vertex from a non-trivial strong clique, which is
clearly also no $M$-partitionable. Hence, $G$ is not a minimal $M$-obstruction. If $G$ is 
acyclic but has more than $m+1$ vertices, then we can choose a proper subset $X$ of
$m+1$ vertices.  The digraph $G'$ induced on these vertices must not be $M$-partitionable,
since $G'$ has only strong cliques consisting of one vertex each, and an $M$-partition is a
partition into $m$ strong cliques. Thus again $G$ is not a minimal $M$-obstruction.
\end{proof}

We don't know whether a result analogous to Theorem \ref{csp} holds for $H$-colouring by 
digraphs $H$. In other words, we don't know whether for any relational structure $H$ there
exists a digraph $H^*$ such that the constraint satisfaction problem for $H$ is polynomially 
equivalent to the $H^*$-colouring problem for transitive digraphs. Recalling that since $H^*$ 
is a digraph, the $H^*$-colouring problem is an $M$-partition problem with a matrix $M$ 
without $1$'s. So we formulate the following open problem.

\begin{problem}
Does there exist, for any relational structure $H$, a matrix $M$ without $1$'s, such that the 
constraint satisfaction problem for $H$ is polynomially equivalent to the $M$-partition problem 
for transitive digraphs?
\end{problem}

We do have the following interesting intermediate result. 

\begin{theorem}
For any relational structure $H$, there exists a matrix $M$ with no $1$'s off the main diagonal,
such that the constraint satisfaction problem for $H$ is polynomially equivalent to the 
$M$-partition problem for transitive digraphs.
\end{theorem}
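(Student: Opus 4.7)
The plan is to imitate the proof of Theorem~\ref{csp}, replacing the off-diagonal strong-arc digon gadget by a diagonal strong-loop gadget, so that the resulting matrix $M$ has its only $1$'s on the main diagonal. As in Theorem~\ref{csp}, I start from the $H$-retraction problem for a bipartite graph $H$ whose white vertices are domination-free, with singleton lists on the black vertices. I will build a trigraph $H'$ with vertices: every white $w$ of $H$, and for each black $v$ of $H$ two vertices $v$ and $\tilde v$; set $M(w,w)=0$, $M(v,v)=1$ (strong loop), $M(\tilde v,\tilde v)=0$; add weak arcs ($*$) from each white $w$ to both $v$ and $\tilde v$ whenever $wv\in E(H)$, and leave every other entry $0$. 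The corresponding input $G'$ is obtained from $G$ by orienting all edges from white to black and replicating each black $H$-vertex $x$ of $G$ into a two-vertex strong clique $\{x_1,x_2\}$; one checks that $G'$ is transitive and that $M$ has no $1$'s off the diagonal.

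For the forward direction I will use the fact that domination-freeness forces any $H$-retraction $f:G\to H$ to fix $H$ pointwise (exactly as in Theorem~\ref{csp}), and then define $f':G'\to H'$ by $f'(w')=f(w')$ on whites, $f'(x_1)=f'(x_2)=x$ on each duplicated pair, and $f'(y)=\widetilde{f(y)}$ on every non-$H$ black $y$. The $M$-partition conditions are then routine to verify: preimages of strong loops are the duplicated digons (strong cliques), preimages of whites and tildes are independent sets (bipartiteness of $G$), and all arc/non-arc constraints match the $*$ and $0$ entries of $M$.

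The backward direction is the key step. Given an $M$-partition $f'$ of $G'$, I first observe that $H'$ has no digons off the diagonal (the only $*$'s go from whites to blacks or tildes, and off-diagonal entries are never $1$) and no weak loops on the diagonal, so each digon $\{x_1,x_2\}$ of $G'$ must satisfy $f'(x_1)=f'(x_2)=v$ for some strong-loop vertex $v$. Since different duplicated digons come from non-adjacent $H$-blacks and so have no arcs between them in $G'$, their images cannot merge into a single strong clique, yielding an injection---hence a bijection by cardinality---$\sigma:B(H)\to B(H)$ between the digons of $G'$ and the strong loops of $H'$. Setting $f(w')=f'(w')$, $f(x)=\sigma(x)$, and $f(y)=v$ whenever $f'(y)=\tilde v$, I obtain a homomorphism $f:G\to H$. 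Iterating $f|_H$, some power $\sigma^k$ is the identity on $B(H)$; then $(f|_H)^k$ fixes $B(H)$, and the same domination-free argument forces it to fix every white of $H$ as well, making $f|_H$ an automorphism of $H$, so $(f|_H)^{-1}\circ f$ is the desired retraction.

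The converse reduction, from the $M$-partition problem for transitive digraphs to the $H$-retraction problem, follows exactly the scheme in Theorem~\ref{csp}: any non-trivial strong clique of $G'$ must map injectively to one of the $|B(H)|$ strong-loop vertices of $H'$, so one rejects if $G'$ has more than $|B(H)|$ such cliques, and otherwise tries each of the constantly many injective assignments by contracting every strong clique to a single vertex carrying the appropriate singleton list and invoking an $H$-retraction oracle. I expect the main obstacle to be the bijectivity argument in the backward direction: one must verify carefully that the new $H'$ introduces no unintended digons or weak loops, and that the duplicated cliques of $G'$ therefore have no option but to land on the strong-loop vertices, after which the iteration-plus-domination-free trick from Theorem~\ref{csp} completes the argument.
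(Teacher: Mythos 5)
Your construction and argument coincide with the paper's: it too replaces each black vertex $v$ of $H$ by two vertices, one carrying a strong loop (your $v$) and one loopless (your $\tilde v$), and then reuses the reductions of Theorem~\ref{csp} essentially verbatim. Your write-up is correct and in fact supplies more of the verification (in particular, that digons of $G'$ cannot straddle two parts and must land bijectively on the strong-loop vertices) than the paper, which simply states that ``the rest of the proofs are almost identical.''
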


\begin{proof}
The proof is similar to the above proof of Theorem \ref{csp}. As before, we work with an
$H$-retraction problem for a bipartite graph $H$ in which the white vertices of $H$ are 
domination-free and the $H$-retraction problem is assumed to have singleton lists $\{x\}$ 
only on the black vertices $x$ of $H$. This bipartite graph $H$ is transformed into a trigraph 
$H'$ such that the $H$-retraction problem is polynomially equivalent to the $H'$-colouring 
problem. First, all edges of $H$ are oriented from white to black vertices, and made into weak 
arcs of $H'$. Second, each black vertex $v$ of $H$ is replaced by two vertices $v_1, v_2$ 
and one strong loop on $v_2$. All other vertices of $H$ have the same arcs to each $v_i$ as 
they had to $v$. If $G$ is a bipartite graph containing $H$ with each black vertex $x$ of $H$ 
having the list $\{x\}$, then construct a transitive digraph $G'$ by orienting all edges of $G$ 
from white to black vertices, and duplicating in $G$ each black vertex $x$ of $H$ into a digon 
$x_1x_2, x_2x_1$. The rest of the proofs are almost identical.
\end{proof}

\section{Colourings}

An interesting special case of matrix partition occurs when $M$ has only asterisks off the main
diagonal. If there are $k$ zeros and $\ell$ ones on the main diagonal, the $M$-partition problem
asks whether the input digraph $G$ can be partitioned into $k$ independent sets and $\ell$ cliques.
Such a partition is usually called a $(k,\ell)$-colouring of $G$ \cite{gimbel}. A minimal
$M$-obstruction in this case will be called a {\em minimal $(k,\ell)$-obstruction}.

Consider first the $(k,0)$-colouring problem, which is just the classical $k$-colouring problem.

\begin{proposition}\label{p1}
Every transitive minimal $(k,0)$-obstruction is a transitive semi-complete digraph with $k+1$ 
vertices.
\end{proposition}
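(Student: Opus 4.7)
The plan is to reduce $(k,0)$-colourability of a transitive digraph $G$ to ordinary $k$-colouring of its underlying undirected graph $G^{*}$. Since a part in a $(k,0)$-colouring must be free of arcs, $G$ admits a $(k,0)$-colouring if and only if $G^{*}$ is properly $k$-colourable. The key observation is that $G^{*}$ is a comparability graph, hence perfect, so $\chi(G^{*})=\omega(G^{*})$, and the maximum clique of $G^{*}$ has a transparent structural meaning in $G$.

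To show $G^{*}$ is a comparability graph I would use the replication description from the introduction: $G$ is obtained from an acyclic transitive digraph $J$ by blowing up each $j\in V(J)$ into a strong clique $C_{j}$. Choose an arbitrary linear order within each $C_{j}$ and define a partial order $\preceq$ on $V(G)$ by declaring $u\preceq v$ whenever $u,v$ lie in the same $C_{j}$ with $u$ below $v$ in the chosen linear order, or $u\in C_{i}$, $v\in C_{j}$ and $ij$ is an arc of $J$. The comparability graph of $\preceq$ is exactly $G^{*}$: within a strong component all pairs are joined by digons, and between components joined by an arc of $J$ all pairs are joined by arcs in $G$. A maximum clique of $G^{*}$ therefore corresponds to a maximum chain in $\preceq$, which in turn selects a chain $j_{1}<_{J}\cdots <_{J}j_{t}$ in $J$ together with all vertices of $C_{j_{1}}\cup\cdots\cup C_{j_{t}}$. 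The sub-digraph induced on such a set is transitive and semi-complete, and conversely every semi-complete sub-digraph of $G$ arises this way, since its own strong components are strong cliques whose condensation is a chain of $J$. Hence $G$ is $(k,0)$-colourable if and only if every semi-complete sub-digraph of $G$ has at most $k$ vertices.

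To finish, suppose $G$ is a minimal transitive $(k,0)$-obstruction. Then $G$ contains a semi-complete sub-digraph $S$ of smallest order at least $k+1$; smaller semi-complete subgraphs are $(k,0)$-colourable by the equivalence above, so $|V(S)|=k+1$, and $S$ is itself not $(k,0)$-colourable. If $S$ were a proper subgraph of $G$, then for any $v\notin V(S)$ the digraph $G-v$ would still contain $S$ and fail to be $(k,0)$-colourable, contradicting minimality. Therefore $V(S)=V(G)$, and $G$ is a transitive semi-complete digraph on $k+1$ vertices. The only delicate step is the clique/chain identification in the middle paragraph; the rest is bookkeeping using minimality.
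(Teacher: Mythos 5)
Your proof is correct and follows essentially the same route as the paper: reduce $(k,0)$-colouring to $k$-colouring of the underlying graph, observe that this graph is a comparability graph and hence perfect, and conclude by minimality that it must be $K_{k+1}$. The paper simply asserts the comparability-graph fact, whereas you verify it explicitly via the replication structure; the extra detail is harmless and the argument is sound.
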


\begin{proof}
The directions of the arcs again do not matter in this case, and we may just look at the underlying
undirected graph. So suppose $G$ is a transitive minimal $(k,0)$-obstruction. Then the 
underlying graph $G'$ of $G$ is again a comparability graph, and hence perfect; therefore $G'$
is the clique $K_{k+1}$. It follows that $G$ must be a transitive semi-complete digraph with $k+1$ 
vertices.
\end{proof}

On the other hand, when $k=0$, we have a matrix partition with the matrix $M$ having ones on 
the main diagonal. Using the result of Theorem \ref{M1}, we conclude the following fact.

\begin{corollary}\label{p2}
Every transitive minimal $(0,\ell)$-obstruction has at most $\ell + 1$ vertices. 
\end{corollary}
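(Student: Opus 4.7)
The plan is to obtain this statement as an immediate consequence of Theorem \ref{M1}. First I would unpack the definition of $(0,\ell)$-colouring in matrix-partition language: a $(0,\ell)$-colouring is a partition into $0$ independent sets and $\ell$ (strong) cliques, which corresponds exactly to the $M$-partition problem with $M$ the $\ell \times \ell$ matrix satisfying $M(i,i)=1$ for all $i$ (so each part $V_i$ must be a strong clique of the input) and $M(i,j)=*$ for all $i\neq j$ (so no constraint is imposed between distinct parts). In particular, a transitive minimal $(0,\ell)$-obstruction is the same object as a transitive minimal $M$-obstruction for this $M$.

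Next I would observe that this $M$ meets the hypothesis of Theorem \ref{M1}: its main diagonal consists entirely of $1$'s. Applying Theorem \ref{M1} with $m=\ell$ then yields that every transitive minimal $M$-obstruction is acyclic and has at most $\ell+1$ vertices, which is precisely the content of the corollary (with the acyclicity as a free bonus).

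There is essentially no obstacle here; the corollary is labelled as such because Theorem \ref{M1} is strictly stronger than what is needed. The only thing to double-check is that the matrix $M$ I have written down really encodes $(0,\ell)$-colouring in the trigraph framework set up in the introduction, i.e., that asterisks off the diagonal correctly represent ``no constraint on arcs between $V_i$ and $V_j$'' and $1$'s on the diagonal correctly represent ``$V_i$ is a strong clique''. Both are built into the definitions, so the reduction is immediate and the quoted bound follows at once.
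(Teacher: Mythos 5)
Your proposal is correct and is exactly the paper's route: the text preceding the corollary observes that $(0,\ell)$-colouring is the $M$-partition problem for the $\ell\times\ell$ matrix with $1$'s on the diagonal and $*$'s elsewhere, and then invokes Theorem \ref{M1} to get the bound $\ell+1$. Your identification of the matrix and the application of the theorem match the paper's (very brief) derivation.
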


Consider next the case when $k=1$. Note that if $G$ is an acyclic transitive digraph, then it has
a $(1,\ell)$-colouring if and only if it has a vertex cover of size $\ell$. Thus if $G$ is a minimal 
acyclic transitive obstruction to $(1,\ell)$-colouring, then it is a minimal obstruction also to
having a vertex cover of size $\ell$. According to \cite{dinneen1,dinneen2}, this implies that
$G$ has at most $2\ell + 2$ vertices. (The first paper implies this for minor obstructions, the
second paper proves that they are in fact subgraph obstructions.)

For $G$ not necessarily acyclic, with some additional lemmas,  we can still obtain the same bound 
on the size of a minimal obstruction.

\begin{lemma} \label{maxcomp}
Suppose $G$ is a transitive minimal ($k,\ell$)-obstruction.
A strong clique of $G$ has at most $k+1$ vertices.
\end{lemma}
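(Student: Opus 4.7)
The plan is to argue by contradiction: suppose $G$ contains a strong clique $C$ with $|C| \ge k+2$, and derive an explicit $(k,\ell)$-colouring of $G$, contradicting that $G$ is an obstruction.

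First, I would pick any vertex $v \in C$. Since $G$ is a \emph{minimal} $(k,\ell)$-obstruction, $G - v$ is $(k,\ell)$-colourable: it admits a partition into independent sets $I_1, \dots, I_k$ and strong cliques $K_1, \dots, K_\ell$. The set $C \setminus \{v\}$ is a strong clique of $G - v$ containing at least $k+1$ vertices, and each independent set $I_i$ can meet it in at most one vertex. By pigeonhole, some vertex $u \in C \setminus \{v\}$ must lie in one of the clique-parts $K_j$.

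The key step is to verify that $K_j \cup \{v\}$ is still a strong clique of $G$, because then reassigning $v$ to $K_j$ produces a $(k,\ell)$-colouring of all of $G$, a contradiction. For any $w \in K_j$, we have the digon between $v$ and $u$ (as both lie in the strong clique $C$) and the digon between $u$ and $w$ (as both lie in the strong clique $K_j$); since $v \neq w$, transitivity of $G$ applied to $v \to u \to w$ and to $w \to u \to v$ produces both arcs $vw$ and $wv$, that is, a digon between $v$ and $w$. Hence $v$ is joined by digons to every vertex of $K_j$, and $K_j \cup \{v\}$ is indeed a strong clique of $G$.

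This is really the only non-routine point: everything else is a pigeonhole count on $C \setminus \{v\}$ followed by a one-line modification of the colouring of $G - v$. The main subtlety to keep in mind is that the argument uses transitivity through a vertex ($u$) that simultaneously belongs to $C$ and to $K_j$, which is exactly what the pigeonhole step secures.
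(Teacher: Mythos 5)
Your proof is correct and follows essentially the same route as the paper's: delete a vertex $v$ from a putative strong clique of size $k+2$, use the pigeonhole principle on $C\setminus\{v\}$ to find a clique-part containing some $u\in C$, and use transitivity through $u$ to absorb $v$ into that part. Your explicit verification that $K_j\cup\{v\}$ is a strong clique even spells out the transitivity step slightly more carefully than the paper does.
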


\begin{proof}
If $\ell = 0$, then every minimal obstruction is a semi-complete transitive digraph on $k+1$ vertices,
so the result holds. Thus, we will assume $\ell \ge 1$.
	
Suppose that $C$ is a strong clique of $G$ with at least $k+2$ vertices and let $v$ be one such vertex.   
Since $G$ is a minimal obstruction, $G - v$ has an $M$-partition $(V_1, \dots, V_m)$.   Recalling that
$C$ is  a strong clique of $G$ and $V_j$ is an independent set for $1 \le j \le k$, it follows that there is a
vertex $u \in C \setminus \{ v \}$ such that $u \in V_i$ for  $k+1 \le i \le m$.   By the choice of $i$, the
set $V_i$ is a strong clique of $G$. Also, $u, v$ form a digon in $G$, so by the transitivity of $G$, we know
that the set $V_i \cup v$ is also a strong clique  of $G$.   Therefore, $(V_1, \dots, V_i \cup \{ v \},
\dots, V_m )$ is a ($k,\ell$)-colouring of $G$, a contradiction.
\end{proof}

\begin{lemma} \label{cliqueelimination}
Suppose $G$ is a transitive minimal ($k,\ell$)-obstruction. If $G$ has $r \le \ell$ strong
cliques $C_1, \dots, C_r$, each with $k+1$ vertices, then $G - \bigcup_{i = 1}^r C_i$ is 
a minimal ($k,\ell-r$)-obstruction.
\end{lemma}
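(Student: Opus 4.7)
The plan is to prove both directions: first, that $G - \bigcup C_i$ fails to be $(k,\ell-r)$-colourable, and second, that deleting any of its vertices makes it $(k,\ell-r)$-colourable. The critical structural ingredient is that each $C_i$ is a maximum strong clique (by Lemma \ref{maxcomp}), hence a maximal strong clique, and in a transitive digraph maximal strong cliques are exactly strong components. Thus the $C_i$ are pairwise disjoint strong components. From this I also extract a second observation that will be used repeatedly: if $W$ is any strong clique that meets $C_i$, then $W \subseteq C_i$, since any vertex $v \in W$ forms a digon with a vertex $u \in W \cap C_i$ and therefore lies in the strong component $C_i$.

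For the obstruction property, suppose for contradiction that $G - \bigcup C_i$ admits a $(k,\ell-r)$-colouring $(V_1,\dots,V_k,W_1,\dots,W_{\ell-r})$. I would extend this to a $(k,\ell)$-colouring of $G$ by introducing $r$ new clique parts $W_{\ell-r+i} := C_i$. Each $V_j$ remains independent and each original $W_j$ remains a strong clique in $G$ since we have only added parts, and each $C_i$ is by hypothesis a strong clique. This contradicts the assumption that $G$ is a $(k,\ell)$-obstruction.

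For minimality, take any $w \in G - \bigcup C_i$ and, using the minimality of $G$ as a $(k,\ell)$-obstruction, fix a $(k,\ell)$-colouring $\pi = (V_1,\dots,V_k,W_1,\dots,W_\ell)$ of $G - w$. Since each $C_i$ is a strong clique of size $k+1$ and each $V_j$ is independent, at most $k$ vertices of $C_i$ lie in $\bigcup_j V_j$, so at least one vertex of $C_i$ lies in some clique class $W_{f(i)}$. By the second observation, $W_{f(i)} \subseteq C_i$, and since the $C_i$ are pairwise disjoint, the indices $f(1),\dots,f(r)$ are distinct. More generally, for every $j$ with $W_j \cap \bigcup C_i \neq \emptyset$ we have $W_j \subseteq \bigcup C_i$. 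Thus at least $r$ clique classes of $\pi$ lie entirely inside $\bigcup C_i$; restricting $\pi$ to $V(G) \setminus (\bigcup C_i \cup \{w\})$ empties all of these, leaving at most $\ell - r$ nonempty clique classes alongside the $k$ (possibly shrunken) independent sets. Padding with empty parts if necessary yields a $(k,\ell-r)$-colouring of $(G - \bigcup C_i) - w$, as required.

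The only mildly delicate step is the minimality direction, where one must argue that the $r$ cliques $C_i$ actually \emph{consume} $r$ distinct clique classes of $\pi$. This is exactly where transitivity does the work, via the identification of maximum strong cliques with strong components.
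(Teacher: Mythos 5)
Your proof is correct, but it takes a genuinely different route from the paper's. The paper argues by induction on $r$, peeling off one clique $C_{r+1}$ at a time: the obstruction property is immediate (adding $C_{r+1}$ back as a new clique class), and minimality is shown by contradiction, supposing $G' - C_{r+1}$ properly contains a minimal $(k,\ell-(r+1))$-obstruction $B$, invoking minimality of $G'$ to colour the subgraph induced by $V(B) \cup C_{r+1}$, and then reorganizing that colouring so that one clique class is exactly $C_{r+1}$, which yields a forbidden colouring of $B$. You instead give a direct, non-inductive argument whose engine is the structural observation that a strong clique of size $k+1$ in a minimal obstruction is necessarily an entire strong component (Lemma \ref{maxcomp} plus transitivity), so that any clique class of any colouring that touches some $C_i$ is absorbed into $\bigcup C_i$; hence a $(k,\ell)$-colouring of $G-w$ must spend at least $r$ distinct clique classes inside $\bigcup C_i$, and restricting away $\bigcup C_i$ frees them. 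What your approach buys is transparency about where transitivity enters and an explicit construction of the required colouring of $(G - \bigcup C_i)-w$ from one of $G-w$; it also makes explicit the pairwise disjointness of the $C_i$, which the paper's induction quietly needs (so that $C_{r+1}$ is still a strong clique of $G'$) but does not spell out. What the paper's inductive formulation buys is that the single-clique removal step is self-contained and does not require identifying $C_{r+1}$ with a strong component, only that some vertex of it lands in a clique class. Both arguments are sound.
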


\begin{proof}
By induction on $r$.   The case for $r = 0$ is trivial.   Suppose $G' = G - \bigcup_{i = 1}^r C_i$ is a minimal
($k, \ell-r$)-obstruction, and let $C_{r+1}$ be a strong clique on $k+1$ vertices of $G'$.   Observe that 
$G' - C_{r+1}$ is not ($k, \ell-(r+1)$)-colourable, otherwise $G'$ would be ($k, \ell-r$)-colourable.

For a contradiction, suppose that $G' - C_{r+1}$ is not a minimal ($k, \ell-(r+1)$)-obstruction.
Hence, $G' - C_{r+1}$ contains a minimal ($k, \ell-(r+1)$)-obstruction $B$. Since $G'$ is a minimal 
($k, \ell-r$)-obstruction, the subgraph induced by $V(B) \cup C_{r+1}$ has a ($k, \ell-r$)-colouring.   
Let $(V_1, \dots, V_{k+\ell-r})$ be one such colouring.   At least one vertex of $C_{r+1}$ must belong 
to some $V_j$ with $k+1 \le j$, say $j = k+\ell-r$, because $V_i$ is an independent set for every 
$1 \le i \le k$ and $C_{r+1}$ is a strong clique.   Thus, $(V'_1, \dots, V'_{k+\ell-r})$ is also a 
($k, \ell-r$)-colouring of the subgraph induced by $V(B) \cup C_{r+1}$, where 
$V'_i = V_i \setminus C_{r+1}$, for $i <k+\ell-r$ and $V'_{k+\ell-r} = C_{r+1}$.   
But clearly $(V'_1, \dots, V'_{k+\ell-(r+1)})$ is a ($k,\ell-(r+1)$)-colouring of $B$, a contradiction.
\end{proof}

\begin{theorem}\label{p3}
Suppose $G$ is a transitive minimal $(1,\ell)$-obstruction.   Then $G$ has at most $2(\ell+1)$-vertices.
\end{theorem}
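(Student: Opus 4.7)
The plan is to reduce the general case to the acyclic case, which was already handled via the vertex-cover obstructions of \cite{dinneen1,dinneen2}. First I would apply Lemma \ref{maxcomp} with $k=1$ to conclude that strong cliques of $G$ have at most two vertices, so the only strong cliques of $G$ are singletons and digons. Two distinct digons cannot share a vertex: if $u\leftrightarrow v$ and $v\leftrightarrow w$ with $u\neq w$, then transitivity gives $u\leftrightarrow w$ and hence a strong clique of size three, contradicting Lemma \ref{maxcomp}. Thus the digons of $G$ are pairwise vertex-disjoint; let $r$ denote their number.

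Next I would split into two cases according to the size of $r$. If $r\geq \ell$, pick any $\ell$ digons $C_1,\dots,C_\ell$; by Lemma \ref{cliqueelimination} the digraph $G-\bigcup_{i=1}^\ell C_i$ is a minimal $(1,0)$-obstruction, which by Proposition \ref{p1} is a semi-complete transitive digraph on exactly two vertices, so $|V(G)|=2\ell+2$. If $r<\ell$, apply Lemma \ref{cliqueelimination} with all $r$ digons to conclude that $G':=G-\bigcup_{i=1}^{r} C_i$ is a minimal $(1,\ell-r)$-obstruction. Since every directed cycle in a transitive digraph lies inside a strong clique and we have removed all digons, $G'$ is an acyclic transitive digraph. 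In such a digraph every strong clique is a singleton, so a $(1,\ell-r)$-colouring is exactly a vertex cover of size $\ell-r$ (as already noted in the paragraph preceding the theorem). Hence $G'$ is a minimal obstruction to having a vertex cover of size $\ell-r$, and the bound from \cite{dinneen2} gives $|V(G')|\leq 2(\ell-r)+2$. Therefore $|V(G)|=|V(G')|+2r\leq 2(\ell+1)$.

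The heavy lifting has already been done in Lemmas \ref{maxcomp} and \ref{cliqueelimination}, which respectively cap strong cliques at size $k+1$ and let us peel off maximum strong cliques one at a time while preserving minimality of the obstruction. Given those tools, the main substantive point to verify carefully is that after removing all $r$ digons the remaining digraph is genuinely acyclic transitive, which is immediate from the fact that cycles in transitive digraphs are confined to strong cliques. The rest is a clean case split on whether $r$ reaches $\ell$, combined with invoking Proposition \ref{p1} or the acyclic vertex-cover obstruction bound.
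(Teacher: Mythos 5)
Your proof is correct and follows essentially the same route as the paper: apply Lemma \ref{maxcomp} to cap strong cliques at digons, note the digons are pairwise disjoint by transitivity, peel them off via Lemma \ref{cliqueelimination}, and bound what remains either by the $(1,0)$ case or by the acyclic vertex-cover obstruction bound of \cite{dinneen1,dinneen2}. The only difference is cosmetic (your case split on $r\ge\ell$ versus $r<\ell$ instead of the paper's capping of $r$ at $\ell$ followed by a split on $\ell-r=0$).
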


\begin{proof}
Suppose that $G$ is a transitive minimal ($1,\ell$)-obtruction.   It follows from Lemma \ref{maxcomp} that
every largest clique of $G$ has two vertices, i.e., it is a digon, and it follows from the transitivity of $G$
that the digons of $G$ are pairwise disjoint.   Let $C_1, \dots, C_r$ be the set of digons of $G$.   Suppose
that $r \le \ell$, otherwise consider only the first $\ell$ digons in the sequence $C_1, \dots, C_r$.   Let $G'$
be the transitive digraph $G' = G - \bigcup_{i=1}^r C_i$.   By Lemma \ref{cliqueelimination} the digraph
$G'$ is a minimal ($1,\ell-r$)-obstruction.   If $\ell-r = 0$, then the underlying graph of $G'$ is a minimal
non-$1$-colourable comparability graph, and hence it has exactly $2$ vertices.   Otherwise, $G'$ is an
acyclic transitive minimal ($1,\ell-r$)-obstruction.   As described above, this problem corresponds to
finding a vertex cover of size at most $\ell-r$, and the obstructions for such problem have at most
$2(\ell-r + 1)$ vertices.   In either case, since we removed $2 r$ vertices from $G$, adding up the vertices
remaining in $G'$ we obtain a bound of $2(\ell+1)$ for the vertices of $G$.
\end{proof}

Let $M$ be a general matrix with $k$ is zeros and $\ell$ $1$'s on the main diagonal.
It is an interesting phenomenon that many matrix partition problems which have only a finite
number of minimal $M$-obstructions have the size of minimal $M$-obstructions bounded 
by $(k+1)(\ell+1)$ \cite{perfect,full,survey}.

\begin{problem}
Show that there are only finitely many transitive minimal $(k,\ell)$-obstructions.

Is it true that all transitive minimal $(k,\ell)$-obstructions have at most $(k+1)(\ell+1)$ vertices?
\end{problem}

Note that the bound of $(k+1)(\ell+1)$ would unify all the previous cases discussed above, Proposition
\ref{p1}, Corollary \ref{p2}, and Theorem \ref{p3}.

We complement the above problem with a polynomial time algorithm for ($k,\ell$)-colourability 
of transitive digraphs.

\begin{theorem} \label{algor}
A transitive digraph $G$ is ($k,\ell$)-colourable if and only if the removal of some $\ell$ strong
components of $G$ yields a $k$-colourable underlying graph.
\end{theorem}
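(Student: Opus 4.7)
The plan is to prove both directions directly, exploiting the structural fact already used in the paper, that every strong component of a transitive digraph $G$ is a strong clique (since digons witness strong connectivity).

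For the forward direction, I will start with a $(k,\ell)$-colouring $V_1,\dots,V_k,W_1,\dots,W_\ell$, where each $V_i$ is an independent set and each $W_j$ is a strong clique. Every non-empty $W_j$ is itself a set of pairwise-digon vertices, so it lies inside a single strong component $D_j$ of $G$. Distinct $W_j$'s may collapse into the same $D_j$, so this yields at most $\ell$ distinct strong components; if one insists on exactly $\ell$, pad with arbitrary further strong components (removing more vertices can only make $k$-colouring easier). After removing these components, every surviving vertex lies in $V_1\cup\dots\cup V_k$, and the $V_i$'s restricted to what remains give a $k$-colouring of the underlying graph, since each $V_i$ is an independent set in $G$ and hence in its underlying graph.

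For the converse, I will take $\ell$ strong components $D_1,\dots,D_\ell$ whose removal leaves a subgraph whose underlying graph admits a $k$-colouring $U_1,\dots,U_k$. Each $U_i$ is independent in the underlying graph, so it contains no arcs of $G$ in either direction and is therefore an independent set of $G$. Each $D_j$, being a strong component of the transitive digraph $G$, is a strong clique. Consequently $(U_1,\dots,U_k,D_1,\dots,D_\ell)$ is a $(k,\ell)$-colouring of $G$.

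The argument is essentially bookkeeping; no deeper tools (Dilworth, induction on $|V(H)|$, perfectness) are needed at the level of the equivalence itself. The only delicate point is the possible collapse of several $W_j$'s into one strong component in the forward direction, but this is harmless since the statement only demands removal of up to $\ell$ strong components rather than $\ell$ components indexed by the cliques of the partition.
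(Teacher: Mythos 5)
Your proof is correct and follows essentially the same route as the paper's (which simply notes that strong components of a transitive digraph are strong cliques for one direction, and enlarges the disjoint strong cliques of a $(k,\ell)$-colouring to strong components for the other). You have merely made explicit the bookkeeping the paper leaves implicit, namely that several cliques of the partition may land in the same strong component and that one can pad with extra components.
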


\begin{proof}
If the removal of some $\ell$ strong components yields a $k$-colourable graph, then $G$ is ($k,\ell$)-colourable, 
since the strong components of $G$ are strong cliques. On the other hand, the disjoint strong cliques in any
($k,\ell$)-colouring of $G$ may be enlarged to strong components of $G$.
\end{proof}

Theorem \ref{algor} yields the following polynomial time algorithm to check whether a transitive digraph $G$ 
has a ($k,\ell$)-colouring.   Let $\mathcal{S}$ be the set of strong components of $G$. If $|\mathcal{S}| \le
\ell$, then $G$ is ($k,\ell$)-colourable.   Otherwise, for each subset $S$ of $\mathcal{S}$ such that
$|S| = \ell$, we check whether the underlying graph of $G - S$ is $k$-colourable; this can be done in
polynomial time, since it is a comparability graph. There are only ${|\mathcal{S}| \choose \ell} \le {|V| \choose \ell}$
subsets of $\mathcal{S}$ of size $\ell$, hence the overall time of performing the aforementioned
test for all such subsets of $\mathcal{S}$ is polynomial.

\vspace{2mm}

As a last observation, we note that there is another popular notion of digraph colouring, namely
acyclic colouring \cite{victor}. Specifically, an {\em acyclic $k$-colouring} of a digraph $G$ is a partition
of the vertices into $k$ parts none of which contains a cycle of $G$; and the {\em dichromatic number},
more recently just called the {\em chromatic number} \cite{keevash}, of a digraph $G$ is the smallest 
integer $k$ such that $G$ has an acyclic $k$-colouring. An acyclic transitive digraph $J$ clearly has 
chromatic number equal to one, and it follows from the description of the structure of an arbitrary
transitive digraph discussed at the end of Section 1, that the chromatic number of an arbitrary transitive
digraph $G$, obtained from an acyclic transitive $J$ by vertex substitutions, is equal to the maximum
value $k_j$ of the size of any replacing set of vertices. Thus the only transitive minimal obstruction to 
$k$-colouring is the strong clique on $k+1$ vertices.

\section{Conclusions}

For colouring problems on transitive digraphs we've observed that acyclic $k$-colouring has a unique
minimal transitive obstruction (the strong $(k+1)$-clique), and hence can be decided in polynomial
time. On the other hand, for $(k,\ell)$-colourings there are many interesting open problems, with
the most natural question being whether for any $k$ and $\ell$ all minimal transitive $(k,\ell)$-
obstructions have at most $(k+1)(\ell+1)$ vertices. We were able to confirm that this is the case in
the cases $k=0, \ell=0,$ and $\ell=1$. In general we don't even know if the number of minimal
transitive obstructions is finite, or if the problem is solvable in polynomial time for transitive digraphs.

For homomorphism, i.e., $H$-colouring, problems on transitive digraphs, we have shown that there
are only finitely many transitive minimal obstructions (and hence the problems are solvable in polynomial
time for transitive digraphs) in the cases when $H$ is symmetric, or asymmetric, or transitive and
semi-complete. (In the latter case, both transitivity and semi-completeness have been shown to be
necessary for the conclusion.) We have seen the full range of possibilities for the difficulty of $H$-colouring 
problems for transitive digraphs $H$, i.e., finitely many transitive minimal $H$-obstructions, infinitely 
many transitive minimal $H$-obstructions but polynomial $H$-colouring problem for transitive digraphs, 
and NP-complete $H$-colouring problem, even when restricted to transitive digraphs.

For matrix partition, i.e., $M$-partition, problems on transitive digraphs, we have seen that the range
of possibilities is as large as that for constraint satisfaction problems, since for ever template $H$ for
a constraint satisfaction problem there exists a matrix $M$ such that the $H$-colouring problem is
polynomially equivalent to the $M$-partition problem for transitive digraphs. In this, we were able to 
ensure that the matrix $M$ has zero diagonal, or no $1$'s off the main diagonal; however, when the 
main diagonal of $M$ consists of $1$'s, the problem becomes easy, as there are only finitely many 
transitive minimal $M$-obstructions.

We have also identified a number of interesting questions to explore.


\begin{thebibliography}{999}

\bibitem{joergen}
J. Bang-Jensen and G. Gutin,
Digraphs: Theory, Algorithms and Applications, Springer-Verlag 2000.

\bibitem{jing}
J. Bang-Jensen and J. Huang,
Quasi-transitive digraphs.
J. Graph Theory 20 (1995) 141--161.

\bibitem{trig1}
M. Chudnovsky, Berge trigraphs, J. Graph Theory 55 (2006) 1 -- 55.

\bibitem{dinneen1}
M. J. Dinneen and L. Xiong,
Minor-order obstructions for the graphs of vertex cover 6.
J. Graph Theory 41 (2002) 163--178.

\bibitem{dinneen2}
M. J. Dinneen and R. Lai,
Properties of vertex cover obstructions.
Discrete Math. 307 (2007) 2484--2500.

\bibitem{full}
T. Feder, P. Hell,
On realizations of point determining graphs, and obstructions to full homomorphisms,
Discrete Math. 308 (2008) 1639 -- 1652.

\bibitem{perfect}
T. Feder, P. Hell,
Matrix partitions of perfect graphs,
Discrete Math. 306 (2006) 2450 -- 2460.

\bibitem{motwa}
T. Feder, P. Hell, S. Klein, and R. Motwani,
List partitions,
SIAM J. Discrete Mathematics 16 (2003) 449 -- 478.

\bibitem{fv}
T. Feder and M.Y. Vardi,
The computational structure of monotone monadic SNP and constraint
satisfaction: a study through Datalog and group theory.
SIAM J. Comput. 28 (1998) 236 -- 250.

\bibitem{kernels}
H. Galeana-S\'anchez and C. Hern\'andez-Cruz,
$k$-kernels in generalizations of transitive digraphs.
Discuss. Math. Graph Theory 31 (2011) 293 -- 312.

\bibitem{gimbel}
T. Ekim and J. Gimbel,
Partitioning graphs into complete and empty graphs,
Discrete Math. 309 (2009) 5849 -- 5856.

\bibitem{paulusma}
P.A. Golovach, M. Johnson, D. Paulusma, J. Song,
A survey on the computational complexity of colouring graphs with forbidden subgraphs,
arXiv:1407.1482.

\bibitem{gol}
M. Golumbic, Algorithmic Graph Theory and Perfect Graphs, Academic Press, 1980.

\bibitem{survey}
P. Hell,
Graph partitions with prescribed patterns,
European J. Combinatorics 35 (2014) 335 -- 353.

\bibitem{trig}
T. Feder, P. Hell, and K. Tucker-Nally,
Digraph matrix partitions and trigraph homomorphisms,
Discrete Applied Math. 154 (2006) 2458 -- 2469.

\bibitem{keevash}
P. Keevash, Z. Li, B. Mohar, and B. Reed,
Digraph girth via chromatic number, 
SIAM J. Discrete Math. 27 (2013) 639 -- 696.

\bibitem{victor}
Victor Neumann-Lara, The dichormatic number of a digraph,
J. Combinato. Theory B 33 (1982) 265 -- 270.

\bibitem{papa}
C.H. Papadimitriou and K. Steiglitz,
Combinatorial Optimization: Algorithms and Complexity,
Dover 1982.

\end{thebibliography}
\end{document}